\begin{document}

\title{Invariance Preserving Discretization Methods of Dynamical Systems}

\author{%
{\sc
Zolt\'{a}n Horv\'{a}th\thanks{Email: horvathz@sze.hu}, 
} \\[2pt]
             Department of Mathematics and Computational Sciences,  
             Sz\'{e}chenyi Istv\'{a}n University\\
              9026 Gy\H{o}r, Egyetem t\'{e}r 1, Hungary\\[6pt]
{\sc and}\\[6pt]
{\sc Yunfei Song\thanks{Corresponding author. Email: songyunfei1986@gmail.com}
and
Tam\'{a}s Terlaky\thanks{Email: terlaky@lehigh.edu}}\\[2pt]
Department of Industrial and Systems Engineering,
           Lehigh University\\
           200 West Packer Avenue, Bethlehem, PA, 18015-1582
}
\shortauthorlist{Zolt\'{a}n Horv\'{a}th, Yunfei Song and Tam\'{a}s Terlaky}

\maketitle

\begin{abstract}
{In this paper, we consider  local and uniform invariance preserving steplength thresholds on a set when a discretization method is applied to a linear or nonlinear dynamical system. 
For the forward or backward Euler method, the existence of local and uniform invariance preserving steplength thresholds is proved when the invariant sets are polyhedra, ellipsoids, or Lorenz cones. Further, we also quantify the steplength thresholds of the backward Euler methods on these sets for linear dynamical systems. 
Finally, we present our main results on the existence of uniform invariance preserving steplength threshold of general discretization methods on  general convex sets, compact sets, and  proper cones  both for  linear and nonlinear dynamical systems.}
{invariant set; invariance preserving; discretization method; compact set; proper cone.}
\end{abstract}

\section{Introduction}
\label{sec;introduction}

A set $\mathcal{S}$ is referred to as a \emph{positively invariant set} for continuous and discrete dynamical systems if  the starting state
of the dynamical system belongs to $\mathcal{S}$ implies that all the forward
states remain in $\mathcal{S}$. This concept has extensive applications in dynamical system  and control theory (see e.g., \cite{Blanchini, mariani, boyd, luen}). 
The popular candidate sets for positively  invariant sets, which are usually studied for continuous and discrete systems, are
polyhedra (see \cite{caste, aless, dorea}), ellipsoids (see \cite{boyd}), and Lorenz
cones (see \cite{loewy, stern, Vander}). The popularity of these special sets is due to their nice properties and the fact that they are widely used in modeling important applications. A unified approach to derive  sufficient and necessary conditions under which a set is a positively invariant set is presented in \cite{song1}.

 In practice, continuous dynamical systems are often approximated by discrete dynamical systems,
e.g., when we compute the numerical solution of a differential
equation. By using certain disretization methods, we should preserve as many
properties of the continuous  dynamical system as possible, in addition to
the requirement to have small approximation error. Thus, if the
continuous  dynamical systems has a positively invariant set, then the same set should also be  a positively invariant set for the corresponding  discrete system which is obtained by using the discretization method.  Such a discretization method is called \emph{invariance preserving} for the dynamical system on the positively invariant set.

In this paper, our focus is  to find conditions, in particular  steplength thresholds for the discretization methods, such that the considered discretization method is invariance preserving for the given linear or nonlinear dynamical system. This topic is of great interest in the fields of dynamical systems, partial differential equations, and control theory.
A basic result is presented in \cite{bolleycrouzeix1978}, which considers linear problems
and  invariance preserving on the positive orthant from a perspective of numerical
methods.
For invariance preserving on the positive orthant or polyhedron for Runge-Kutta methods, the reader is refereed to \cite{horva, horvathapnum2005, horvathfarkasconf2006}.
A similar concept named strong stability preserving
(SSP) used in numerical methods is studied in \cite{sspbook, siga2, shu2}. These papers  deal with invariance preserving of general sets and they usually use the assumption that the  Euler methods are invariance preserving with a
steplength threshold $\tau_0.$ Then the uniform invariance preserving steplength threshold  for other advanced numerical methods, e.g., Runge-Kutta methods, is derived in terms of $\tau_0.$
Therefore, to make the results applicable to solve real world problems, this approach requires  to check whether such a positive $\tau_0$
exists for Euler methods. We note  that quantifications of  steplength thresholds for some classes discretization  methods on a polyhedron are studied in \cite{song3}.

In this paper, basic concepts and theorems are introduced in Section \ref{sec21}. In Section \ref{sec:single} first we prove that for the forward Euler method, a local invariance preserving steplength threshold exists for a given polyhedron when a linear dynamical system is considered.  For the backward Euler method we prove that a local steplength threshold exists for polyhedron, ellipsoid, and Lorenz cone. These proofs are using elementary concepts. We also quantify a valid local steplength threshold for the backward Euler method. 
Second, we prove that a uniform invariance preserving steplength threshold exists for polyhedra when the forward or backward Euler method is applied to linear dynamical systems. For the backward Euler method, we also quantify the optimal uniform steplength threshold. 
In Section \ref{sec:uniform} we first prove that a uniform steplength threshold exists, and also quantify the optimal uniform steplength threshold for ellipsoids or Lorenz cones when the backward Euler method is applied to linear dynamical systems. Moreover, we extend the results about the invariance preserving steplength threshold for the backward Euler method to general proper cones. 
Finally, we present our main results about uniform steplength thresholds. These results are natural extensions from the proofs used to analyze Euler methods. We quantify the optimal uniform steplength threshold of the backward Euler methods for convex sets. We also extend the results of steplength thresholds to general compact sets and proper cones when a general discretization method is applied to linear or nonlinear dynamical systems. In particular, the existence of steplength thresholds depends on a condition that is stronger than local steplength threshold\footnote{In particular, this condition requires   that if $x_k$ is in the set, then  $x_{k+1}$ is in the interior of the set.}. It also depends on the Lipschitz condition when the set is a compact  set or homogenous condition when the set is a proper cone. Our conclusions are summarized in Section \ref{sec:con}.

The main novelty of this paper is establishing the foundation of characterizing  invariance preserving  discretization methods in dynamical systems and differential equations. As mentioned before,  several existing results on invariance preserving of advanced numerical methods, e.g., Runge-Kutta methods, require the existence of a positive steplength threshold for Euler methods. In this paper, we present the results for special classical sets. Our general results about steplength threshold for general discretization methods for linear and nonlinear dynamical systems on convex sets, compact sets, and proper cones not only play an important role in the theoretical perspective, but also show the potential of significant impacts in practice. These general results provide theoretical criteria for the verification of the existence of invariance preserving steplength threshold for discretization methods.  Once the existence is ensured by our results, this also motivates one to further investigate the possibility to find the optimal steplength threshold, which has several advantages in practice. Such advantages include   computational efficiency and smaller size of discrete systems.

\emph{Notation and Conventions.} To avoid unnecessary repetitions,
the following notations and conventions are used in this paper. The $i$-th row of a matrix $G$ is denoted by $G_i^T.$ The interior and the boundary of a set $\mathcal{S}$ is denoted by
int$(\mathcal{S})$ and  $\partial \mathcal{S}$, respectively.
  The index set $\{1,2,...,n\}$ is
denoted by $\mathcal{I}(n).$ A symmetric positive definite and positive semidefinite matrix is denoted by $Q\succ0$ and $Q\succeq0$, respectively.

\section{Preliminaries} \label{sec21}

In this paper, we consider  discrete and continuous linear dynamical
systems which are respectively represented as 
\begin{equation}\label{dyna2}
x_{k+1}=Ax_k,
\end{equation}
\begin{equation}\label{dyna1}
\dot{x}(t)=Ax(t),
\end{equation}
where $A\in \mathbb{R}^{n\times n}$, $x_k,  x(t)\in \mathbb{R}^n$, $t\in
\mathbb{R}$, and $k\in \mathbb{N}$.  Invariant sets for discrete and continuous systems are introduced as follows:

\begin{definition}\label{def1}
Let  $\mathcal{S}\subseteq \mathbb{R}^n$. If $ x_k\in \mathcal{S}$
implies $ x_{k+1}\in \mathcal{S}$, for all
 $k\in \mathbb{N}$, then $\mathcal{S}$ is an invariant  set for
the discrete system (\ref{dyna2}).
\end{definition}

\begin{definition}\label{def2}
Let  $\mathcal{S}\subseteq \mathbb{R}^n$. If $x(0)\in \mathcal{S}$ implies
$x(t)\in\mathcal{S}$,  for all $t\geq0$, then $\mathcal{S}$ is an invariant set for
the continuous system (\ref{dyna1}).
\end{definition}

According to Definition \ref{def1}, we have that if $\mathcal{S}$ is an invariant set for the discrete system (\ref{dyna2}), then $x_0\in\mathcal{S}$ implies that $x_k\in\mathcal{S}$ for all $k\in \mathbb{N}.$ In fact, the sets defined in Definition \ref{def1} and \ref{def2} are conventionally referred to as positively invariant sets, since only the positive time domain is considered. For simplicity, we  call them invariant sets. 

In this paper, some special sets, namely polyhedra, ellipsoids,  and
Lorenz cones are considered as candidate invariant sets for both discrete and continuous systems. We now formally define these sets.  
A \emph{polyhedron} $\mathcal{P}\subseteq \mathbb{R}^n$ can be represented as 
\begin{equation}\label{poly1}
\mathcal{P}=\{x\in \mathbb{R}^n\,|\,Gx\leq b\},
\end{equation}
where $G\in \mathbb{R}^{m\times n} $, $ b\in \mathbb{R}^m$.
Equivalently, 
\begin{equation}\label{poly2}
\mathcal{P}=\Big\{x\in \mathbb{R}^n\,|\,x=\sum_{i=1}^{\ell_1}\theta_i
x^i+\sum_{j=1}^{\ell_2}\hat{\theta}_j\hat{x}^j,~
\sum_{i=1}^{\ell_1}\theta_i=1, \theta_i\geq0, \hat{\theta}_j\geq
0\Big\},
\end{equation}
where $x^1,...,x^{\ell_1},\hat{x}^1,...,\hat{x}^{\ell_2}\in
\mathbb{R}^n$. 
An \emph{ellipsoid} $\mathcal{E}\subseteq \mathbb{R}^n$ centered at the
origin can be represented as 
\begin{equation}\label{elli}
\mathcal{E}=\{x\in\mathbb{ R}^n \,|\, x^TQx\leq 1\},
\end{equation}
where  $Q\in \mathbb{R}^{n\times n}$ and $Q\succ0$. 
A \emph{Lorenz cone}\footnote{A Lorenz cone also refers to as an
ice cream cone, or a second order cone.} $\mathcal{C_L}\subseteq \mathbb{R}^n$
with its vertex at the origin  can be represented as 
\begin{equation}\label{ellicone}
\mathcal{C_L}=\{x\in \mathbb{R}^n\,|\,x^TQx\leq 0,\, x^TQu_n\leq0\},
\end{equation}
where $Q\in \mathbb{R}^{n\times n}$ is a symmetric
matrix and ${\rm
inertia}\{Q\}=\{n-1,0,1\}$.

The following definition introduces the concepts of invariance preserving and steplength threshold. 
\begin{definition}\label{invpre}
Assume a set $\mathcal{S}$ is an invariant set for the continuous system (\ref{dyna1}), and a discretization method is applied to the continuous system to yield a discrete system.  
\begin{itemize}
\item For a given $x_k\in \mathcal{S}$, if there exists a $\tau(x_k)>0,$ such that $x_{k+1}\in\mathcal{S}$ for $\Delta t\in [0,\tau(x_k)]$, where $x_{k+1}$ is obtained by using the discretization method, then the discretization method is \textbf{locally invariance preserving at $x_k$}, and $\tau(x_k)$ is a \textbf{local invariance preserving steplength threshold}  for this discretization method \textbf{at $x_k$}.
\item If there exists a $\tau>0,$ such that $\mathcal{S}$ is also an invariant set for the discrete system for any steplength $\Delta t\in[0,\tau]$, then the discretization method is \textbf{uniformly invariance preserving on $\mathcal{S}$} and $\tau$ is a \textbf{uniform  invariance preserving  steplength threshold}  for this discretization method \textbf{on $\mathcal{S}$}.
\end{itemize}
\end{definition}

The forward and backward Euler methods are simple first order discretization methods that are usually
applied to  solve ordinary differential equations numerically with
initial conditions.  The forward Euler method, which is an
explicit method, is conditionally stable. On the other hand, the backward Euler
method, which is an implicit method, is unconditionally stable, see, e.g., \cite{high2}. The
two Euler methods for the continuous system $\dot{x}(t)=Ax(t)$ are
described as follows:
\begin{enumerate}
  \item {Forward Euler Method:  }\begin{equation}\label{euler1}
 \frac{x_{k+1}-x_k}{\Delta t}=Ax_k
\Rightarrow x_{k+1}=(I+\Delta t A)x_k.
\end{equation}
  \item {Backward Euler Method:  }\begin{equation}\label{euler2}
\frac{x_{k+1}-x_k}{\Delta t}=Ax_{k+1}\Rightarrow x_{k+1}=(I-\Delta t
A)^{-1}x_k.
\end{equation}
\end{enumerate}

Now we consider the effects of the Euler methods on the
continuous system, i.e., given a vector $x_k$ in
$ \mathcal{S},$ we investigate conditions that ensure that
$x_{k+1}$ obtained by (\ref{euler1}) or (\ref{euler2}) is also in $
\mathcal{S}$. A geometric interpretation of the forward Euler
method is that $x_{k+1}$  is  on the tangent line of $x(t)$ at
boundary  point $x_k$.  For
a convex set $\mathcal{S}$, it is well known that the tangent
space at $x_k$ on the boundary of $\mathcal{S}$ is a supporting
hyperplane to $\mathcal{S}$, see e.g., \cite{rock}.  Figure \ref{fig1} illustrates the effects of
the Euler methods on two classes of trajectories. In these two
cases, the convex sets include the trajectory on its boundary, and
include the region above the curves. The left subfigure of Figure
\ref{fig1} shows that the forward and backward Euler methods lead
the discrete steps direct outside and inside the convex set,
respectively. The right subfigure of Figure \ref{fig1} shows that
the discrete steps for both Euler methods are on the boundary.

\begin{figure}[h]
    \centering
    \includegraphics[width=0.4\textwidth]{./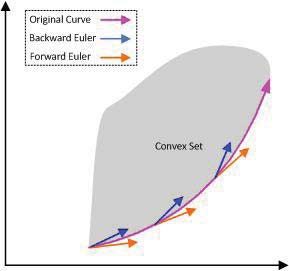}~~~~~~~
    \includegraphics[width=0.4\textwidth]{./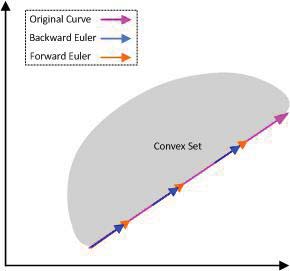}
    \caption{The left figure illustrates when $x(t)$ is a curve, the right figure illustrates when $x(t)$ is a line.}
    \label{fig1}
\end{figure}

\section{Local Steplength Threshold}\label{sec:single}
In this section, we prove the existence of an invariance preserving local steplength threshold   when the invariant sets are polyhedra, ellipsoids, and Lorenz cones.

\subsection{Existence of Local Steplength Threshold}
We first consider polyhedral sets and the forward and backward Euler methods for linear systems. 
\begin{lemma}\label{lemma20}
Assume that a polyhedron $\mathcal{P}$, given as in (\ref{poly1}), is an invariant set for the continuous system (\ref{dyna1}), and $x_k\in \mathcal{S}$.
Then there exists a $\tau(x_k)>0$, such that
$x_{k+1}\in\mathcal{P}$ for all $\Delta t\in[0,\tau(x_k)]$, where $x_{k+1}$ is obtained by the forward Euler method
(\ref{euler1}).
\end{lemma}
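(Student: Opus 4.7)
The plan is to write the forward Euler iterate componentwise against each halfspace defining $\mathcal{P}$ and show that, for each row $G_i^T$ of $G$, the inequality $G_i^T x_{k+1} \leq b_i$ can be maintained for a small enough steplength. Concretely, I would compute
\begin{equation*}
G_i^T x_{k+1} = G_i^T(I + \Delta t\, A)x_k = G_i^T x_k + \Delta t\, G_i^T A x_k,
\end{equation*}
and analyze the inequality $G_i^T x_k + \Delta t\, G_i^T A x_k \le b_i$ case by case depending on whether the $i$-th constraint is active or inactive at $x_k$.

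First, for indices $i$ where the constraint is strictly inactive, i.e., $G_i^T x_k < b_i$, the inequality holds for $\Delta t = 0$ with a strict slack $b_i - G_i^T x_k > 0$, so by continuity in $\Delta t$ there exists some $\tau_i(x_k) > 0$ such that the constraint remains satisfied on $[0,\tau_i(x_k)]$ (explicitly, $\tau_i(x_k) = (b_i - G_i^T x_k)/|G_i^T A x_k|$ if the denominator is positive; otherwise any positive number works). Second, for indices $i$ where the constraint is active, $G_i^T x_k = b_i$, I would exploit the continuous invariance hypothesis: the solution $x(t)$ of (\ref{dyna1}) with $x(0) = x_k$ stays in $\mathcal{P}$ for all $t \geq 0$, hence $G_i^T x(t) \le b_i = G_i^T x(0)$ for all $t \ge 0$. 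Differentiating at $t = 0$ yields $G_i^T A x_k \le 0$, which makes the map $\Delta t \mapsto G_i^T x_k + \Delta t\, G_i^T A x_k$ nonincreasing, so the $i$-th constraint is preserved for every $\Delta t \ge 0$, and we may set $\tau_i(x_k) = +\infty$ for such $i$.

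Finally, I would take $\tau(x_k) = \min_{i \in \mathcal{I}(m)} \tau_i(x_k)$, which is strictly positive because it is the minimum of finitely many positive (possibly $+\infty$) values. For every $\Delta t \in [0,\tau(x_k)]$ all $m$ constraints are simultaneously satisfied, giving $x_{k+1} \in \mathcal{P}$ as required.

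The only nontrivial ingredient is the active-constraint case, where one must pass from continuous invariance to an infinitesimal (tangency) condition on $A x_k$. This is essentially a one-sided directional derivative argument at $t=0$ along the differentiable trajectory $x(t) = e^{tA} x_k$; the inactive rows are handled by a purely continuity-based perturbation bound. Because the polyhedron is described by finitely many linear inequalities, no uniformity over $i$ is needed beyond taking a finite minimum.
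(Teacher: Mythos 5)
Your proof is correct, and it takes a genuinely different route from the paper. The paper disposes of interior points by openness and then simply invokes Nagumo's theorem for boundary points, treating the subtangentiality condition as a black box. You instead argue halfspace by halfspace: for an inactive row the slack $b_i - G_i^T x_k>0$ survives a small perturbation, and for an active row you derive the subtangentiality condition $G_i^T A x_k\le 0$ directly from the invariance hypothesis by taking the one-sided derivative of $t\mapsto G_i^T e^{At}x_k$ at $t=0$; since the constraint is affine, this makes the active constraints preserved for \emph{every} $\Delta t\ge 0$, and the finite minimum over the $m$ rows gives a positive threshold. What your approach buys is self-containedness and an explicit, computable $\tau_i(x_k)$ for each row (in effect you reprove the only direction of Nagumo's theorem needed here, exploiting that for a polyhedron the tangent cone condition is exactly linear, so no $o(\Delta t)$ correction arises); what the paper's approach buys is brevity and the observation, made right after the lemma, that the Nagumo argument extends verbatim to nonlinear systems, whereas your explicit use of $e^{At}$ and of the affine structure of the Euler step ties the computation to the linear case (though the same one-sided derivative argument would also work for a general $C^1$ vector field). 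One cosmetic point: the threshold for an inactive row should be taken as $\tau_i(x_k)=(b_i-G_i^Tx_k)/(G_i^TAx_k)$ only when $G_i^TAx_k>0$ (when it is negative the constraint holds for all $\Delta t\ge0$), but your formula with the absolute value still yields a valid, merely smaller, positive bound, so this does not affect correctness.
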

\begin{proof}
 Since $\text{int}(\mathcal{P})$ is an open set, we have that the statement is true for $x_k\in\text{int}(\mathcal{P})$.  For $x_k\in \partial \mathcal{P}$, by Nagumo's Theorem, see e.g., \cite{song1, nagu}, the statement is also true. 
\end{proof}

In fact, the proof of Lemma \ref{lemma20} is also applicable for nonlinear systems, thus a similar conclusion about the local steplength threshold can be obtained for nonlinear systems too.  Now we turn our attention to the backward Euler method.

\begin{lemma}\label{lemma201}
Assume that a polyhedron $\mathcal{P}$,  given as in (\ref{poly1}),
is an invariant set for the continuous system (\ref{dyna1}), and $x_k\in \mathcal{P}$. Then there exists a $\tau(x_k)>0$, such that $x_{k+1}\in\mathcal{P}$ for all $\Delta t\in[0,\tau(x_k)]$, where $x_{k+1}$ is obtained
by the backward Euler method (\ref{euler2}).
\end{lemma}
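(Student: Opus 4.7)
The plan is to write the backward Euler step as a Neumann series in $\Delta t$ and verify each linear inequality of $\mathcal{P}$ individually. For $\Delta t$ small enough that $\Delta t \|A\| < 1$, one has
\begin{equation*}
x_{k+1} = (I - \Delta t A)^{-1} x_k = \sum_{j=0}^{\infty} (\Delta t)^j A^j x_k,
\end{equation*}
hence, for each row $G_i^T$ of $G$,
\begin{equation*}
G_i^T x_{k+1} - b_i = (G_i^T x_k - b_i) + \sum_{j=1}^{\infty} (\Delta t)^j G_i^T A^j x_k.
\end{equation*}
For any index $i$ with $G_i^T x_k < b_i$, continuity of the right-hand side at $\Delta t = 0$ immediately gives $G_i^T x_{k+1} \le b_i$ in a neighbourhood of $0$. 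The only real work is therefore at the (finite) active index set $I(x_k) := \{i : G_i^T x_k = b_i\}$.

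For $i \in I(x_k)$, let $j_i^\ast$ denote the smallest $j \ge 1$ with $G_i^T A^j x_k \neq 0$; if no such $j$ exists, then $G_i^T x_{k+1} = b_i$ holds identically and that constraint is automatic, so assume $j_i^\ast < \infty$. The sign of $G_i^T A^{j_i^\ast} x_k$ is then read off the continuous trajectory: by invariance of $\mathcal{P}$ for (\ref{dyna1}), the scalar function $f_i(t) := G_i^T e^{tA} x_k - b_i$ satisfies $f_i(0) = 0$ and $f_i(t) \le 0$ for all $t \ge 0$, so $t = 0$ is a one-sided maximum. The Taylor expansion $f_i(t) = \sum_{j \ge 1} (t^j/j!)\, G_i^T A^j x_k$ thus forces its first nonvanishing coefficient to be strictly negative, yielding $G_i^T A^{j_i^\ast} x_k < 0$. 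Substituting back,
\begin{equation*}
G_i^T x_{k+1} - b_i = (\Delta t)^{j_i^\ast}\, G_i^T A^{j_i^\ast} x_k\, \bigl[1 + O(\Delta t)\bigr] \le 0
\end{equation*}
for all $\Delta t$ in some interval $[0, \tau_i]$, and $\tau(x_k) := \min_i \tau_i > 0$ is the desired local threshold, since $I(x_k)$ and the full index set are finite.

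The main obstacle is the degenerate case $G_i^T A x_k = 0$ (and possibly $G_i^T A^j x_k = 0$ for several further $j$) at an active constraint: then the naive truncation $x_{k+1} \approx x_k + \Delta t A x_k$ only controls the residual up to $o(\Delta t)$, which is insufficient to determine its sign. Passing to the full Neumann expansion of $(I-\Delta t A)^{-1} x_k$ and importing the one-sided maximum property of $f_i$ from the continuous invariance hypothesis—which supplies information at every order in $t$, not merely the first—is precisely what resolves this ambiguity and makes the argument go through uniformly for every active face type of $\mathcal{P}$.
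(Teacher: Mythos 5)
Your proposal is correct and follows essentially the same route as the paper's proof: expand $(I-\Delta t A)^{-1}$ as a Neumann series, handle inactive constraints by continuity, and for active constraints use the Taylor expansion of $G_i^T e^{tA}x_k - b_i$ (forced $\le 0$ by continuous invariance) to conclude that the first nonvanishing coefficient $G_i^T A^{j}x_k$ is strictly negative, then take the minimum threshold over the finite index set. Your justification of the sign of that leading coefficient via the one-sided maximum property is in fact slightly more explicit than the paper's, which asserts it directly from the series inequality.
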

\begin{proof} 
Since $\mathcal{P}$ is an invariant set for the continuous system (\ref{dyna1}), we have $G(e^{At}x)\leq b$
for all $t\geq0.$ By substituting 
$e^{At}=I+At+\tfrac{1}{2}A^2t^2+\cdots$, we have $Gx+tGAx+\tfrac{t^2}{2!}GA^2x+\cdots\leq b$ for all $t\geq0,$ which, for all $t\geq0,$ can be written as
\begin{equation}\label{eq3331}
G_i^Tx+tG_i^TAx+\tfrac{t^2}{2!}G_i^TA^2x+\tfrac{t^3}{3!}G_i^TA^3x+\cdots\leq b_i, \text{ for } i\in \mathcal{I}(n).
\end{equation}
For the backward Euler method we need to prove that for given $Gx_k\leq b$ there exists a $\tau(x_k)>0$, such that $G(I-A\Delta t)^{-1}x_k\leq b$, for $\Delta t\in [0,\tau(x_k)],$ which, by using  $(I-A\Delta
t)^{-1}=I+A\Delta t+A^2\Delta t+\cdots$, is equivalent to  
\begin{equation}\label{eq3332}
G_i^Tx_k+\Delta tG_i^TAx_k+(\Delta t)^2G_i^TA^2x_k+(\Delta t)^3G_i^TA^3x_k+\cdots\leq b_i, \text{ for } i\in \mathcal{I}(n),
\end{equation}
for $\Delta t\in[0, \tau(x_k)].$ For $i\in \mathcal{I}(n)$, we
denote the  bound for $\Delta t$ by $\tau_i(x_k)\geq0,$ such that
(\ref{eq3332}) holds. We have the following
three cases:
\begin{itemize}
\item If $G_i^Tx_k<b_i,$ then $\tau_i(x_k)>0$ due to the fact that int$(\mathcal{P})$ is an open set.

\item If there exists an $\ell\geq1$, such that $G_i^Tx_k=b, G_i^TAx_k=0,...,G_i^TA^{\ell-1}x=0, $ and $G_i^TA^\ell x_k<0$, then according to (\ref{eq3332}), we have 
$\tau_i(x_k) >0$.

\item If neither of the above two cases is true, then we have $G_i^Tx_k=b, G_i^TA^jx_k=0$ for all $j=1,2,...,$ which yields  $\tau_i(x_k)=\infty.$
\end{itemize}
Let $\tau(x_k)=\min_{i\in \mathcal{I}(n)}\{\tau_i(x_k)\}.$ Since $\mathcal{I}(n)=n$ is finite, we have $\tau(x_k)>0.$ Clearly, when $\Delta t\in[0,\tau(x_k)],$ we have $x_{k+1}=(I-A\Delta t)^{-1}x_k\in\mathcal{P}$. The proof is complete. 
\end{proof}

We now consider ellipsoids and Lorenz cones. If the trajectory of the continuous system is on the boundary of a given ellipsoid or Lorenz cone, then according to the fact that the forward Euler method yields the tangent line of the trajectory at the given point $x_k$, we have that the forward Euler method is not invariance preserving for any $\Delta t>0$. Thus, we only consider the backward Euler method for ellipsoids and Lorenz cones.  
\begin{lemma}\label{lemma21}
Assume that an ellipsoid $\mathcal{E}$, given as in (\ref{elli}),
is an invariant set for the continuous system (\ref{dyna1}), and $x_k\in \mathcal{E}$. Then there exists a $\tau(x_k)>0$, such that $x_{k+1}\in\emph{int}(\mathcal{E})$ for all $\Delta t\in[0,\tau(x_k)]$, where $x_{k+1}$ is obtained
by the backward Euler method (\ref{euler2}).
\end{lemma}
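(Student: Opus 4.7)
My plan is to combine the Lyapunov characterization of ellipsoidal invariance for the continuous flow with one algebraic identity for the backward Euler step, splitting the argument according to whether $x_k$ lies in the interior or on the boundary of $\mathcal{E}$.

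First I would derive from the hypothesis that $A^T Q + Q A \preceq 0$. For any $x \in \partial \mathcal{E}$ the scalar function $t \mapsto (e^{At}x)^T Q (e^{At}x)$ attains the value $1$ at $t=0$ and, by invariance, does not exceed $1$ for $t \ge 0$; its right derivative at the origin is therefore non-positive, which gives $x^T(A^T Q + QA)x \le 0$ on $\partial\mathcal{E}$ and, by homogeneity, on all of $\mathbb{R}^n$. For $x_k \in \mathrm{int}(\mathcal{E})$, the conclusion is then immediate from continuity of the map $\Delta t \mapsto (I-\Delta t A)^{-1} x_k$ at $\Delta t = 0$ together with the openness of $\mathrm{int}(\mathcal{E})$.

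The substantive case is $x_k \in \partial\mathcal{E}$. Setting $y := x_{k+1}$ so that $x_k = (I - \Delta t A)\,y$, a direct expansion produces
\[
1 = x_k^T Q x_k = y^T Q y - \Delta t\, y^T(A^T Q + QA) y + (\Delta t)^2 (Ay)^T Q (Ay),
\]
equivalently
\[
y^T Q y = 1 + \Delta t\, y^T(A^T Q + QA) y - (\Delta t)^2 (Ay)^T Q (Ay).
\]
Both correction terms are non-positive---the first by $A^T Q + QA \preceq 0$, the second because $Q \succ 0$---so $y^T Q y \le 1$ whenever $I - \Delta t A$ is invertible, which is the case on a neighbourhood of $\Delta t = 0$.

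To upgrade to the strict inequality required for $\mathrm{int}(\mathcal{E})$, I would observe that equality in the identity above forces $(Ay)^T Q (Ay) = 0$ and hence $Ay = 0$, which in turn yields $y = x_k$ and $Ax_k = 0$. Thus, provided $Ax_k \ne 0$, continuity ensures $Ay \ne 0$ for all sufficiently small $\Delta t$, the $(\Delta t)^2$ term is strictly negative, and $y \in \mathrm{int}(\mathcal{E})$ for $\Delta t$ in some interval $(0,\tau(x_k)]$. The main subtlety I anticipate is the degenerate equilibrium configuration $Ax_k = 0$ with $x_k \in \partial\mathcal{E}$: there the backward Euler step fixes $x_k$ for every $\Delta t$, so the literal claim $x_{k+1} \in \mathrm{int}(\mathcal{E})$ fails. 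I would handle this by treating such boundary equilibria separately, noting that $x_{k+1} = x_k \in \mathcal{E}$ still preserves invariance, and reserving the strict interior conclusion for the generic case $Ax_k \ne 0$.
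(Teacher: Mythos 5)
Your proposal is correct, and it takes a genuinely different---and in fact sharper---route than the paper's. The paper argues pointwise at $x_k$: it expands $e^{At}$ to extract sign information on the first two Taylor coefficients of $t \mapsto (e^{At}x_k)^TQ(e^{At}x_k)$, then expands $(I-A\Delta t)^{-1}$ as a Neumann series and shows that the first nonvanishing coefficient of $x_{k+1}^TQx_{k+1}-1$ is negative, so the quantity is $<1$ only for \emph{sufficiently small} $\Delta t$. You instead invoke the global Lyapunov characterization $A^TQ+QA\preceq 0$ (correctly derived from invariance on $\partial\mathcal{E}$ plus degree-two homogeneity) and the exact finite identity
\[
x_k^TQx_k \;=\; y^TQy-\Delta t\,y^T(A^TQ+QA)y+(\Delta t)^2(Ay)^TQ(Ay),\qquad y=x_{k+1},
\]
which needs no series truncation and yields $y^TQy\le 1$ for \emph{every} $\Delta t$ at which $I-\Delta tA$ is invertible, with strictness unless $Ax_k=0$. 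This buys a much stronger conclusion: the uniform-threshold result for ellipsoids (Theorem \ref{thm:bound4ellp}, where $\bar\tau=\infty$) essentially falls out of the same computation, whereas the paper's order-by-order local argument is the one that transfers to the Lorenz cone in Lemma \ref{lemma22}, where the global matrix inequality is not available. Your treatment of the degenerate configuration $Ax_k=0$ with $x_k\in\partial\mathcal{E}$ matches the paper's: both observe $x_{k+1}=x_k$ and set that case aside, even though the literal claim $x_{k+1}\in\mathrm{int}(\mathcal{E})$ fails there (and, for boundary points, also at $\Delta t=0$); you are more explicit than the paper in flagging this defect of the statement.
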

\begin{proof}
It is easy to show that  $Ax_k=0$ implies $x_{k+1}=x_k$,
thus we consider the case of  $Ax_k\neq0$. Since $\text{int}(\mathcal{E})$ is an open set, it is trivial to find $\tau(x_k)>0$ for $x_k\in \text{int}(\mathcal{E})$. Thus we consider only the case when $x_k\in\partial\mathcal{E}$, i.e., $x_k^TQx_k=1.$

Since
$\mathcal{E}$ is an invariant set for the continuous system, we have
$x_k^T(e^{At})^TQ(e^{At})x_k\leq1$ for all $t\geq0.$ By substituting 
$e^{At}=I+At+\frac{1}{2}A^2t^2+\mathcal{O}(t^3)$, we have 
\begin{equation*}
x_k^TQx_k+tx_k^T(A^TQ+QA)x_k+t^2(\tfrac{1}{2}x_k^T(A^{2T}+A^2)x_k+(Ax_k)^TQ(Ax_k))+\mathcal{O}(t^3)\leq
1
\end{equation*} 
for all $t\geq0, $ which is, by noting that $x_k^TQx_k=1,$ equivalent to
\begin{equation}\label{eq223}
x_k^T(A^TQ+QA)x_k+t(\tfrac{1}{2}x_k^T(A^{2T}Q+QA^2)x_k+(Ax_k)^TQ(Ax_k))+\mathcal{O}(t^2)\leq
0
\end{equation}
for all  $t\geq0.$ 
If $x_k^T(A^TQ+QA)x_k=0$, then (\ref{eq223}) implies
\begin{equation}\label{eq4876}
\tfrac{1}{2}x_k^T(A^{2T}Q+QA^2)x_k+(Ax_k)^TQ(Ax_k)\leq0.
\end{equation} 
Since $Ax_k\neq0$ and $Q\succ0,$ then $(Ax_k)^TQ(Ax_k)\succ0,$ which, according to (\ref{eq4876}), yields
\begin{equation}\label{eq2211}
\tfrac{1}{2}x_k^T(A^{2T}Q+QA^2)x_k<0.
\end{equation}

For the discrete system obtained by the backward
Euler method, by using $(I-A\Delta
t)^{-1}=I+A\Delta t+A^2\Delta t+\mathcal{O}((\Delta t)^3)$, we have
\begin{equation}\label{eq224}
\begin{split}
x_{k+1}^TQx_{k+1}=&1+\Delta t x_k^T(A^TQ+QA)x_k\\
&+(\Delta
t)^2(x_k^T(A^{2T}+A^2)x_k+(Ax_k)^TQ(Ax_k))+\mathcal{O}((\Delta
t)^3).
\end{split}
\end{equation}
Then we consider the following two cases:
\begin{itemize}
\item If $x_k^T(A^TQ+QA)x_k<0$, then $x_{k+1}^TQx_{k+1}<1$ for sufficiently
small $\Delta t$.

\item  If $x_k^T(A^TQ+QA)x_k=0,$ then, according to (\ref{eq2211}),
we have
\begin{equation*}\label{eq222}
x_k^T(A^{2T}Q+QA^2)x_k+(Ax_k)^TQ(Ax_k)\leq
\tfrac{1}{2}x_k^T(A^{2T}Q+QA^2)x_k<0,
\end{equation*}
i.e., the coefficient of $(\Delta t)^2$ in (\ref{eq224}) is
negative, which yields $x_{k+1}^TQx_{k+1}<1$ for sufficiently
small $\Delta t$.
\end{itemize}
Thus, there exists a $\tau(x_k)>0$ such that  $x_{k+1}\in \text{ int }(\mathcal{E})$ for all
 $\Delta t\in[0,\tau(x_k)]$. The proof is complete.
\end{proof}

Now we are ready to extend the result of Lemma \ref{lemma21} to the case of Lorenz cones. 

\begin{lemma}\label{lemma22}
Assume that a Lorenz cone $\mathcal{C_L}$, given as in (\ref{ellicone}), is
an invariant set for the continuous system (\ref{dyna1}), and $x_k\in \mathcal{C_L}$. Then there exists a $\tau(x_k)>0$, such that $x_{k+1}\in\mathcal{C_L}$ for all $\Delta t\in[0,\tau(x_k)]$, where $x_{k+1}$ is
obtained by the backward Euler method (\ref{euler2}).
\end{lemma}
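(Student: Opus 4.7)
The plan is to adapt the proof of Lemma \ref{lemma21} to the Lorenz cone, coping with two new complications: (i) $\mathcal{C_L}$ is defined by two inequalities rather than one, and (ii) the matrix $Q$ is indefinite, so that the crucial strict positivity $(Ax_k)^TQ(Ax_k)>0$ used in the ellipsoid proof is no longer automatic. I would first dispose of trivial cases: if $x_k\in\mathrm{int}(\mathcal{C_L})$, openness yields $\tau(x_k)>0$. On the boundary, one of the two defining constraints is active. The case $x_k^TQu_n=0$ with $x_k\in\mathcal{C_L}$ forces $x_k=0$ (the linear supporting hyperplane $\{x:x^TQu_n=0\}$ meets the solid light cone only at the vertex, because $u_n$ lies in the selected nappe and $Q$ has inertia $\{n-1,0,1\}$); then $x_{k+1}=0\in\mathcal{C_L}$ trivially. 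So I reduce to the principal case: $x_k\neq0$, $x_k^TQx_k=0$, and $x_k^TQu_n<0$.

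In the principal case the linear constraint takes care of itself: $x_{k+1}^TQu_n=x_k^T(I-\Delta t A^T)^{-1}Qu_n$ depends continuously on $\Delta t$ and is strictly negative at $\Delta t=0$, so it stays $\leq 0$ for small $\Delta t$. For the quadratic constraint I would expand $x_{k+1}^TQx_{k+1}$ by substituting the Neumann series $(I-\Delta tA)^{-1}=I+\Delta tA+(\Delta t)^2A^2+\mathcal{O}((\Delta t)^3)$, and in parallel expand $(e^{At}x_k)^TQ(e^{At}x_k)$ in $t$; continuous invariance forces the latter series (which has vanishing constant term) to be $\leq 0$ on $t\geq 0$. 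Exactly as in the ellipsoid case, if the first-order coefficient $x_k^T(A^TQ+QA)x_k$ is strictly negative then the matching backward-Euler coefficient is negative too, and the conclusion is immediate.

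The real obstacle is the degenerate case $x_k^T(A^TQ+QA)x_k=0$, i.e.\ $Ax_k$ lies in the tangent hyperplane $H_{x_k}=\{v:x_k^TQv=0\}$. Because $Q$ is indefinite, I cannot directly conclude $(Ax_k)^TQ(Ax_k)>0$. My remedy is a geometric observation: since $\mathcal{C_L}$ is a convex cone and $H_{x_k}$ is its supporting hyperplane at the smooth boundary point $x_k$, the intersection $H_{x_k}\cap\{v:v^TQv\leq 0\}$ equals $\mathrm{span}(x_k)$. Hence either $Ax_k=\alpha x_k$ for some $\alpha$, in which case $x_{k+1}=(1-\alpha\Delta t)^{-1}x_k$ is a positive multiple of $x_k$ (for small $\Delta t$) and so remains on $\partial\mathcal{C_L}$; or $Ax_k\notin\mathrm{span}(x_k)$, forcing $(Ax_k)^TQ(Ax_k)>0$. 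In the latter subcase, the continuous bound on the second-order coefficient, namely $\tfrac{1}{2}x_k^T(A^{2T}Q+QA^2)x_k+(Ax_k)^TQ(Ax_k)\leq 0$, yields $x_k^T(A^{2T}Q+QA^2)x_k<-2(Ax_k)^TQ(Ax_k)$, which in turn implies the second-order backward-Euler coefficient is strictly negative, just as in Lemma \ref{lemma21}.

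If both the first- and second-order coefficients of the continuous expansion vanish, one iterates to the first nonzero order, which by continuous invariance must be strictly negative; the same comparison of Neumann versus exponential coefficients, together with the supporting-hyperplane argument applied to higher powers $A^\ell x_k$, controls the backward-Euler coefficient at that order. Putting these together gives $x_{k+1}^TQx_{k+1}\leq 0$ for all sufficiently small $\Delta t$, and combined with the linear-constraint estimate this establishes $x_{k+1}\in\mathcal{C_L}$.
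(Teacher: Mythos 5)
Your proposal is correct and follows essentially the same route as the paper: reduce to a nonzero boundary point, expand $x_{k+1}^TQx_{k+1}$ via the Neumann series against the exponential expansion forced by continuous invariance, and use the fact that the tangent hyperplane meets the cone only along $\mathrm{span}(x_k)$ to split the degenerate case into the eigenvector subcase (where $x_{k+1}$ stays on $\partial\mathcal{C_L}$) and the subcase $(Ax_k)^TQ(Ax_k)>0$ (where the second-order coefficient is strictly negative, as in Lemma \ref{lemma21}). You are in fact slightly more careful than the paper in checking the linear constraint $x^TQu_n\leq0$ and the sign of $\alpha$ in the eigenvector case, while your final paragraph on higher-order coefficients is vacuous since $(Ax_k)^TQ(Ax_k)>0$ already forces the second-order coefficient to be strictly negative.
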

\begin{proof}
Since  $x_k=0$ implies $x_{k+1}=0$, we  consider only the case of $x_k\neq0.$ The
idea of the proof is similar to the proof of Lemma \ref{lemma21}. Since
inequality (\ref{eq223})  also holds for $\mathcal{C_L}$, we have
$x_k^T(A^TQ+QA)x_k\leq0$. If $x_k^T(A^TQ+QA)x_k=0$, then $(Ax_k)^TQx_k=0$, i.e., the inner
product of $Ax_k$ and $Qx_k$ is 0. This
shows that $Ax_k$ is in the tangent plane of $\mathcal{C_L}$ at
$x_k$, since $Qx_k$ is the normal direction at $x_k$ with respect to $\mathcal{C_L}$. The intersection of the
tangent plane and the cone is a half line, thus we consider the following
two cases:
\begin{itemize}
\item If $Ax_k \in\partial\mathcal{C_L}$, i.e.,
$(Ax_k)^TQ(Ax_k)=0$, then $Ax_k$ is in the
intersection of the cone $\mathcal{C_L}$ and the tangent plane
of cone $\mathcal{C_L}$ at $x_k$. Also, since this
intersection is a half line, we have $Ax_k=\lambda_k x_k$ for some $\lambda_k>0,$ i.e., the vector
$x_k$ is an eigenvector of $A$. Thus, we have  $
x_{k+1}=x_k+\lambda_k\Delta t x_k+(\lambda_k\Delta t)^2
x_k+\cdots=\tfrac{x_k}{1-\lambda_k \Delta t}, $ for $\Delta t\in[0,\lambda_k^{-1}),$ which implies
that $x_{k+1}\in \partial \mathcal{C_L}$ for all $\Delta t\in[0,\lambda_k^{-1})$.

\item  If $Ax_k \notin\partial\mathcal{C_L}$, i.e.,
$(Ax_k)^TQ(Ax_k)>0,$ then the rest of the proof is analogous to the proof
of Lemma \ref{lemma21}, which leads to the conclusion that
$x_{k+1}\in \text{int}(\mathcal{C_L}).$ 
\end{itemize}
Thus, there exists a $\tau(x_k)>0$, such that $x_{k+1}\in \mathcal{C_L}$ for all $\Delta t\in[0,\tau(x_k)]$. The proof is complete.
\end{proof}

\subsection{Computation of Local Steplength Threshold}
Lemma \ref{lemma21} and \ref{lemma22} show the existence of a valid steplength threshould such that $x_{k+1}$ obtained by
the backward Euler method is also in the invariant set. In fact, given $x_k\in \mathcal{E}$
(or $\mathcal{C_L}$), we can quantify the steplength threshould. 

For simplicity we consider only the case of  $\mathcal{E}$. To ensure $x_{k+1}\in\mathcal{E}$, we need 
\begin{equation}\label{eq41}
 x_{k+1}^TQx_{k+1}=x_k^TQx_k+\Delta t x_k^T(A^TQ+QA)x_k+\cdots\leq 1.
\end{equation}
We introduce the following notations to represent the sum
of the remaining infinitely many terms starting from the first,
second, and third term in (\ref{eq41}), respectively.
\begin{equation*}
\begin{split}
\sigma_1&=\Delta t x_k^T(A^TQ+QA)x_k+\sigma_2,\\
\sigma_2&=(\Delta
t)^2x_k^T(A^{2T}Q+A^TQA+QA^2)x_k+\sigma_3,\\
\sigma_3&=(\Delta t)^3x_k^T(A^{3T}Q+A^{2T}QA+A^TQA^2+QA^3)x_k+\mathcal{O}((\Delta t)^4).\\
\end{split}
\end{equation*}
Now we use the fact that $\|M+N\|\leq\|M\|+\|N\|$ and
$\|MN\|\leq\|M\|\|N\|$, where $M$ and $N$ are matrices of appropriate dimensions. For simplicity we denote
$\|A\|\Delta t$ by $\tilde{\alpha}$. We can bound $\sigma_1$ as

\begin{equation}\label{eq325}
|\sigma_1|\leq
\|Q\|\|x_k\|^2\left(2\tilde{\alpha}+3\tilde{\alpha}^2+\cdots\right)
=\|Q\|\|x_k\|^2\tfrac{2\tilde{\alpha}-\tilde{\alpha}^2}{(1-\tilde{\alpha})^2},
\end{equation}
where (\ref{eq325}) holds when $\tilde{\alpha}\leq 1$, i.e.,
$\Delta t\leq \tfrac{1}{\|A\|}. $ Similarly, for $\sigma_2$ and
$\sigma_3$, we have
\begin{equation}\label{eq326}
|\sigma_2|\leq
\|Q\|\|x_k\|^2\tfrac{3\tilde{\alpha}^2-2\tilde{\alpha}^3}{(1-\tilde{\alpha})^2},
\text{ and } |\sigma_3|\leq
\|Q\|\|x_k\|^2\tfrac{4\tilde{\alpha}^3-3\tilde{\alpha}^4}{(1-\tilde{\alpha})^2},
\end{equation}
where  $\Delta t\leq \frac{1}{\|A\|}. $ We now consider the
following three cases.

1). If $x_k^TQx_k:=\delta_1 <1$, i.e., $x_k\in\text{int}(\mathcal{E}),$  then to ensure that (\ref{eq41}) holds, we let
$|\sigma_1|\leq 1-\delta_1$, which is true when
\begin{equation}\label{eq331}
\tfrac{2\tilde{\alpha}-\tilde{\alpha}^2}{(1-\tilde{\alpha})^2}\leq\tfrac{
1-\delta_1}{\|Q\|\|x_k\|^2}, \text{ i.e., } \Delta
t\leq
\tfrac{1}{\|A\|}(1-\tfrac{1}{\sqrt{1+\beta_1}}):=
\gamma_1,
\end{equation}
where $\beta_1=(1-\delta_1)\|Q\|^{-1}\|x_k\|^{-2}>0. $

2). If $x_k^TQx_k=1,$ i.e., $x_k\in\partial\mathcal{E},$ and $x_k^T(A^TQ+QA)x_k:=-\delta_2<0,$ then to ensure that (\ref{eq41}) holds,  we let $|\sigma_2|\leq
  \delta_2\Delta t$, which is true when
\begin{equation}\label{eq332}
\tfrac{3\tilde{\alpha}^2-2\tilde{\alpha}^3}{(1-\tilde{\alpha})^2}\leq
\tfrac{\delta_2\|A\|\Delta t}{\|A\|\|Q\|\|x_k\|^2}, \text{ i.e., } \Delta
t\leq\tfrac{1}{\|A\|}(\tfrac{2\beta_2+3-\sqrt{4\beta_2+9}}{2\beta_2+4}):=
\gamma_2,
  \end{equation}
  where $\beta_2=\delta_2\|A\|^{-1}\|Q\|^{-1}\|x_k\|^{-2}>0.$

3). If neither of the previous two cases hold, then according to
  (\ref{eq222})
  we have $x_k^T(A^{2T}Q+A^TQA+QA^2)x_k:=-\delta_3<0$.
  Then to ensure that 
  (\ref{eq41}) holds, we let $|\sigma_3|\leq \delta_3(\Delta t)^2$, which
  is true when
  \begin{equation}\label{eq333}
\tfrac{4\tilde{\alpha}^3-3\tilde{\alpha}^4}{(1-\tilde{\alpha})^2}\leq
\tfrac{\delta_3(\|A\|\Delta t)^2}{\|A\|^2\|Q\|\|x_k\|^2}, \text{ i.e., }  \Delta
t\leq\tfrac{1}{\|A\|}(\tfrac{\beta_3+2-\sqrt{\beta_3+4}}{\beta_3+3}):=
\gamma_3,
  \end{equation}
  where $\beta_3=\delta_3\|A\|^{-2}\|Q\|^{-1}\|x_k\|^{-2}>0.$

Clearly, we have $\gamma_1, \gamma_2, \gamma_3\in(0, \frac{1}{\|A\|}),$ which is consistent with 
conditions  (\ref{eq325}) and (\ref{eq326}). The analysis for a
cone $\mathcal{C_L}$ can be done analogously. The results are
summarized in the following lemma.
\begin{lemma}\label{lemma33}
Assume that an ellipsoid $\mathcal{E}$, given as in (\ref{elli})
(or a Lorenz cone $\mathcal{C_L}$, given as in (\ref{ellicone})), is an invariant set for
the continuous system (\ref{dyna1}),
and $x_k\in \mathcal{E}$ (or $x_k\in \mathcal{C_L}$). Then
$x_{k+1}\in\mathcal{E}$ (or  $\mathcal{C_L}$), where $x_{k+1}$ is obtained by the backward Euler method
(\ref{euler2}) with
\begin{itemize}
  \item $\Delta t \in[0,\gamma_1)$, if $x_k\in $ \emph{int}$(\mathcal{E})$
  (or \emph{int}$(\mathcal{C_L})$),
  \item $\Delta t \in[0,\gamma_2)$, if $x_k\in \partial\mathcal{E}$ (or $\partial \mathcal{C_L}$)
  and $(Ax_k)^TQx_k<0,$
  \item $\Delta t \in [0, \gamma_3)$, if $x_k\in \partial\mathcal{E}$ (or $\partial \mathcal{C_L}$)
  and $(Ax_k)^TQx_k=0,$
\end{itemize}
where $\gamma_1, \gamma_2$ and $\gamma_3$ are defined as in
(\ref{eq331}), (\ref{eq332}), and (\ref{eq333}), respectively.
\end{lemma}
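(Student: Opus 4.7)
The plan is to verify the bounds by expanding $x_{k+1}^T Q x_{k+1}$ via the Neumann series $(I-A\Delta t)^{-1}=\sum_{j\geq 0}(A\Delta t)^j$, truncate after the zeroth, first, or second order term depending on the case, and estimate the remaining tail uniformly. I will reuse the notation already introduced in the paragraphs preceding the lemma: writing
\[
x_{k+1}^T Q x_{k+1}=x_k^TQx_k+\sigma_1,\qquad \sigma_1=\Delta t\,x_k^T(A^TQ+QA)x_k+\sigma_2,
\]
and $\sigma_2$ decomposed similarly, so that $\sigma_j$ collects all terms of order $(\Delta t)^j$ or higher. The key analytic ingredients will be the submultiplicativity $\|MN\|\leq\|M\|\|N\|$ and the closed form $\sum_{j\geq k}(j+1)\tilde\alpha^j$ for the geometric-type tails, which gives the three estimates (\ref{eq325}) and (\ref{eq326}) provided $\tilde\alpha=\|A\|\Delta t<1$.

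Next I will handle the three cases in order. In Case~1 ($x_k\in\operatorname{int}(\mathcal{E})$) we only need the crude bound $|\sigma_1|\leq 1-\delta_1$; substituting the tail estimate (\ref{eq325}) and solving the resulting quadratic inequality in $\tilde\alpha$ yields exactly $\gamma_1$. In Case~2 ($x_k\in\partial\mathcal{E}$ with $x_k^T(A^TQ+QA)x_k=-\delta_2<0$) the linear term produces a gain of $-\delta_2\Delta t$, and it suffices to absorb the remainder $\sigma_2$, giving (\ref{eq332}) and the value $\gamma_2$. In Case~3 ($x_k\in\partial\mathcal{E}$ and $x_k^T(A^TQ+QA)x_k=0$) the crucial observation, already established in the proof of Lemma~\ref{lemma21}, is that the quadratic coefficient $x_k^T(A^{2T}Q+A^TQA+QA^2)x_k$ must be strictly negative (denote it $-\delta_3$); the gain $-\delta_3(\Delta t)^2$ then controls $\sigma_3$ via (\ref{eq326}), producing $\gamma_3$. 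In every case one checks directly that the resulting bound lies in $(0,1/\|A\|)$, which is consistent with the requirement $\tilde\alpha<1$ under which the tail estimates were derived.

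For the Lorenz cone $\mathcal{C_L}$ the proof of the inequality $x_{k+1}^TQx_{k+1}\leq 0$ is literally the same computation, replacing the constant $1$ by $0$ and the definitions of $\delta_1,\delta_2,\delta_3$ in terms of the inner product defined by $Q$. The one subtlety is Case~3 on $\partial\mathcal{C_L}$: if $Ax_k\in\partial\mathcal{C_L}$ as well, then by the argument in Lemma~\ref{lemma22} we have $Ax_k=\lambda_k x_k$, so $x_{k+1}=x_k/(1-\lambda_k\Delta t)$ stays on $\partial\mathcal{C_L}$ for $\Delta t<\lambda_k^{-1}$, and the quadratic form is preserved exactly; this degenerate situation must be separated from the generic subcase $(Ax_k)^TQ(Ax_k)>0$, in which $\gamma_3$ is derived as above. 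I also need to verify that the second linear constraint $x_{k+1}^TQu_n\leq 0$ is preserved — this will follow by the same Neumann expansion, noting that on $\partial\mathcal{C_L}$ with $x_k\neq 0$ the supporting hyperplane condition of Lemma~\ref{lemma22} forces the analogous first-order nonpositivity, after which the tail estimate closes the argument.

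The main obstacle I expect is keeping the bookkeeping clean across the three nested decompositions $\sigma_1\supset\sigma_2\supset\sigma_3$ while ensuring that the strict-inequality hypotheses $\delta_i>0$ yield strictly positive thresholds $\gamma_i$, and then making the cone analogue precise in the eigenvector subcase so that the ``analogous'' reduction is genuinely valid rather than merely formal. Once these two points are settled, solving the explicit rational inequalities to obtain the closed forms in (\ref{eq331})--(\ref{eq333}) is routine algebra.
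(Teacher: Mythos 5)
Your proposal follows essentially the same route as the paper: the paper's argument for this lemma is precisely the computation preceding it, using the nested decomposition $\sigma_1\supset\sigma_2\supset\sigma_3$, the tail bounds (\ref{eq325})--(\ref{eq326}) under $\tilde\alpha=\|A\|\Delta t<1$, and the three cases yielding $\gamma_1,\gamma_2,\gamma_3$, with the cone case dispatched as ``analogous.'' Your additional care in the Lorenz-cone eigenvector subcase and the constraint $x^TQu_n\leq0$ only makes explicit what the paper leaves implicit, so the approaches coincide.
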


Note that $\gamma_1, \gamma_2$, or $\gamma_3$ might be
quite small. Let us consider an ellipsoid as an example. If $x_k$
is sufficiently close to the boundary, then we have $x_k^TQx_k:=\delta_1\approx 1$, which yields that $\gamma_1\approx0$.

We now present two simple examples, in which the
forward Euler method is not invariance preserving, while the
backward Euler method is invariance preserving.

\begin{example}\label{exmp1}
Consider the  ellipsoid
$\mathcal{E}=\{(\xi,\eta)\;|\;\xi^2+\eta^2\leq1\}$ and the system
$\dot{\xi}=-\eta, \dot{\eta}=\xi.$
\end{example}

The solution of this system is $\xi(t)=\alpha\cos t+\beta\sin t$ and
$ \eta(t)=\alpha \sin t-\beta \cos t$, where $\alpha, \beta$ are two
parameters that depend on the initial condition. The solution
trajectory is a circle, thus $\mathcal{E}$ is an invariant set for the system. 
If we apply the forward Euler method, the discrete system
is
$ \xi_{k+1}=\xi_k-\Delta t \eta_k, ~~ \eta_{k+1}=\Delta t \xi_k +\eta_k. $
Thus, we obtain $\xi_{k+1}^2+\eta_{k+1}^2=(1+(\Delta
t)^2)(\xi_k^2+\eta_k^2)> \xi_k^2+\eta_k^2$, which yields
$(\xi_{k+1},\eta_{k+1})\notin \mathcal{E}$ for every $\Delta t>0$
when $(\xi_{k},\eta_{k})\in
\partial\mathcal{E}$. If we apply the backward Euler method, the
discrete system is $\xi_{k+1}=\frac{1}{1+(\Delta t)^2}(\xi_k-\Delta
t \eta_k), ~~ \eta_{k+1}=\frac{1}{1+(\Delta t)^2}(\Delta t \xi_k +
\eta_k).$
Thus we obtain that $\xi_{k+1}^2+\eta_{k+1}^2=\frac{1}{1+(\Delta
t)^2}(\xi_{k}^2+\eta_{k}^2)\leq \xi_{k}^2+\eta_{k}^2$, which yields
$(\xi_{k+1},\eta_{k+1})\in \mathcal{E}$ for every $\Delta t\geq0,$
when $(\xi_{k},\eta_{k})\in \mathcal{E}$.

\begin{example}\label{exmp2}
Consider the Lorenz cone
$\mathcal{C_L}=\{(\xi,\eta,\zeta)\;|\;\xi^2+\eta^2\leq \zeta^2,
\zeta\geq0\}$ and the system
$\dot{\xi}=\xi-\eta, \dot{\eta}=\xi+\eta, \dot{\zeta}=\zeta.$
\end{example}

The solution of the system is  $\xi(t)=e^t(\alpha\cos t+\beta\sin t),$
$\eta(t)=e^t(\alpha\sin t-\beta\cos t)$ and $\zeta(t)=\gamma e^t,$
where $\alpha, \beta, \gamma$ are three parameters depending on the
initial condition. It is easy to show that $\mathcal{C_L}$ is an invariant set for the 
system.  If we apply the forward Euler
method, the discrete system is
\begin{equation*}\label{eq311}
\scriptsize{\begin{pmatrix}
  \begin{array}{c}
    \xi_{k+1}\\
    \eta_{k+1}\\
    \zeta_{k+1}\\
  \end{array}
\end{pmatrix}}= \left(
  \begin{array}{ccc}
    1+\Delta t & -\Delta t &0 \\
    \Delta t  & 1+\Delta t &0 \\
0 & 0 &1+\Delta t\\
  \end{array}
\right) \left (
  \begin{array}{c}
    \xi_k \\
    \eta_k \\
\zeta_k\\
  \end{array}
\right).
\end{equation*}
However, if we choose any $(\xi_k,\eta_k,\zeta_k)\in
\partial\mathcal{C_L}$, then we have
$(\xi_{k+1},\eta_{k+1},\zeta_{k+1})\notin \mathcal{C_L},$ since
$\xi_{k+1}^2+\eta_{k+1}^2=(1+(1+\Delta t)^2)(\xi_k^2+\eta_k^2)>
(1+\Delta t)^2(\xi_k^2+\eta_k^2)=\zeta_{k+1}^2$, for all $\Delta
t>0$. If we apply the backward Euler method, the discrete system is
\begin{equation*}\label{eq321}
\scriptsize\left(
  \begin{array}{c}
    \xi_{k+1}\\
    \eta_{k+1}\\
    \zeta_{k+1}\\
  \end{array}
\right)=\frac{1}{\omega} \left(
  \begin{array}{ccc}
    (1-\Delta t)^2 & -\Delta t(1-\Delta t) &0 \\
    \Delta t(1-\Delta t)  & (1-\Delta t)^2 &0 \\
0 & 0 &(1-\Delta t)^2+\Delta t^2\\
  \end{array}
\right) \left(
  \begin{array}{c}
    \xi_k \\
    \eta_k \\
    \zeta_k\\
  \end{array}
\right),
\end{equation*}
where $\omega=(1-\Delta t)((1-\Delta t)^2+\Delta t^2).$ If we choose
any $(\xi_k,\eta_k,\zeta_k)\in \mathcal{C_L},$ then we have
$(\xi_{k+1},\eta_{k+1},\zeta_{k+1})\in \mathcal{C_L}$, since
$\xi_{k+1}^2+\eta_{k+1}^2=\frac{1}{\omega^2}((1-\Delta t)^4+\Delta
t^2(1-\Delta t)^2)(\xi_k^2+\eta_k^2)\leq
\frac{1}{\omega^2}((1-\Delta t)^2+\Delta
t^2)^2(\xi_k^2+\eta_k^2)\leq \frac{1}{\omega^2}((1-\Delta
t)^2+\Delta t^2)^2z_k^2=\zeta_{k+1}^2$ for all $\Delta t>0$, and
$\zeta_{k+1}=\frac{1}{\omega}((1-\Delta t)^2+\Delta
t^2)\zeta_k\geq0$ for all $\Delta t\in[0,1).$

\section{Uniform Steplength Threshold}\label{sec:uniform}
In the  analysis of Section \ref{sec:single}, the invariance preserving steplength threshold 
depends on the given $x_k$. However, such steplength threshold may lead inconvenience in
practice, i.e., one has to sequentially modify the value of invariance preserving $\Delta t$  
when $x_k$ is changing. Thus,
it is important and useful to obtain a uniform  steplength threshold for invariance preserving that
depends only on the given invariant set.

\subsection{Uniform Steplength Threshold for Linear Systems}
We first consider polyhedral sets and the forward  Euler method. Note that a similar results for polytope is presented in \cite{blan4}.
\begin{theorem}\label{thema1}
Assume that a polyhedron $\mathcal{P}$ is an invariant set for the
continuous system (\ref{dyna1}). Then there exists $\hat\tau>0$, such
that for every $x_k\in \mathcal{P}$ and $\Delta t\in[0, \hat{\tau}]$,
we have $x_{k+1}\in \mathcal{P}$, where $x_{k+1}$ is obtained by the
forward Euler method (\ref{euler1}), i.e., $\mathcal{P}$ is an invariant set for the discrete system.
\end{theorem}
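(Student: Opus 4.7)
The plan is to exploit the polyhedral structure of $\mathcal{P}$ by passing to its Minkowski--Weyl representation (\ref{poly2}) in terms of vertices $x^1,\ldots,x^{\ell_1}$ and extreme rays $\hat x^1,\ldots,\hat x^{\ell_2}$, and then to transfer invariance at each generator to every point of $\mathcal{P}$ via linearity of the forward Euler map $x\mapsto(I+\Delta t\,A)x$ together with convexity. This isolates the ``hard'' information of invariance into finitely many probe points.

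First, I would apply Lemma \ref{lemma20} at each vertex $x^i$ to obtain a positive local threshold $\tau(x^i)$ with $(I+\Delta t\,A)x^i\in\mathcal{P}$ for every $\Delta t\in[0,\tau(x^i)]$. This handles the bounded part of the representation and is essentially the polytope argument in \cite{blan4}. The genuinely new ingredient is the unbounded part. I would show that the recession cone $\mathcal{P}_\infty=\{d\in\mathbb{R}^n:Gd\leq 0\}$ is itself an invariant set for $\dot x=Ax$ by the following limiting argument: pick any $x_0\in\mathcal{P}$; for $d\in\mathcal{P}_\infty$ we have $x_0+sd\in\mathcal{P}$ for all $s\geq 0$, so invariance of $\mathcal{P}$ gives $Ge^{At}(x_0+sd)\leq b$; dividing by $s$ and letting $s\to\infty$ forces $Ge^{At}d\leq 0$, i.e., $e^{At}d\in\mathcal{P}_\infty$. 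Having established this, I would invoke the Nagumo-based argument of Lemma \ref{lemma20} on the invariant convex cone $\mathcal{P}_\infty$ at each extreme ray $\hat x^j$ to obtain a positive $\tau(\hat x^j)$ with $(I+\Delta t\,A)\hat x^j\in\mathcal{P}_\infty$ for every $\Delta t\in[0,\tau(\hat x^j)]$.

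Next, set $\hat\tau=\min\{\tau(x^i),\tau(\hat x^j):i\in\mathcal{I}(\ell_1),\,j\in\mathcal{I}(\ell_2)\}$, which is strictly positive since the minimum is over finitely many positive numbers. For arbitrary $x_k=\sum_i\theta_ix^i+\sum_j\hat\theta_j\hat x^j\in\mathcal{P}$ and $\Delta t\in[0,\hat\tau]$, linearity yields $x_{k+1}=\sum_i\theta_i(I+\Delta t\,A)x^i+\sum_j\hat\theta_j(I+\Delta t\,A)\hat x^j$, in which the first sum is a convex combination of points of $\mathcal{P}$ (hence in $\mathcal{P}$), and the second is a conic combination of points of $\mathcal{P}_\infty$ (hence in $\mathcal{P}_\infty$). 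Using $\mathcal{P}+\mathcal{P}_\infty\subseteq\mathcal{P}$ concludes $x_{k+1}\in\mathcal{P}$.

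The step I expect to be the main obstacle is the recession-cone piece: both verifying that $\mathcal{P}_\infty$ inherits invariance from $\mathcal{P}$ and then producing a local steplength threshold at each extreme ray require handling an unbounded invariant set, whereas for a polytope (no extreme rays) everything collapses to finitely many applications of Lemma \ref{lemma20} combined with convexity. An alternative route that I would briefly consider, but expect to be messier, is to work directly with the ratio $\tfrac{b_i-G_i^Tx}{G_i^TAx}$ and argue that its infimum over $\{x\in\mathcal{P}:G_i^TAx>0\}$ is positive via a fractional LP/compactness argument on the sublevel sets; the vertex--ray route cleanly bypasses these complications.
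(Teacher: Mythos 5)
Your proposal is correct and follows essentially the same route as the paper: reduce to the Minkowski--Weyl generators of $\mathcal{P}$ via (\ref{poly2}), obtain a positive local threshold at each of the finitely many vertices and extreme rays, take the minimum, and transfer to an arbitrary $x_k$ by linearity of $x\mapsto (I+\Delta t\,A)x$ together with convexity. The one place where you genuinely deviate is the treatment of the unbounded part: the paper invokes the tangent-cone characterization of invariance (Lemma 3.11 of \cite{song1}, $Ax^i\in\mathcal{T_P}(x^i)$ and $A\hat x^j\in\mathcal{T_P}(x^i+\hat x^j)$) and asserts that $\hat x^j+\Delta t\,A\hat x^j\in\mathcal{P}$, whereas you first prove by a limiting argument that the recession cone $\mathcal{P}_\infty=\{d: Gd\le 0\}$ is itself invariant for (\ref{dyna1}) and then place $(I+\Delta t\,A)\hat x^j$ in $\mathcal{P}_\infty$ rather than in $\mathcal{P}$. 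Your variant is the more careful one for the final recombination step: a conic combination of points of $\mathcal{P}$ need not lie in $\mathcal{P}$, so what is actually needed there is precisely that the images of the rays are recession directions, which your $\mathcal{P}+\mathcal{P}_\infty\subseteq\mathcal{P}$ argument supplies explicitly. The cost is the extra lemma on invariance of $\mathcal{P}_\infty$ (which your $s\to\infty$ argument handles cleanly); what it buys is that you only ever apply the local-threshold Lemma \ref{lemma20} to a polyhedron (namely $\mathcal{P}$ or the polyhedral cone $\mathcal{P}_\infty$), without needing the tangent-cone conditions at the shifted points $x^i+\hat x^j$.
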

\begin{proof}
Let us assume that $\mathcal{P}$ is given as in (\ref{poly2}). According to Lemma 3.11 in \cite{song1}, we have that $\mathcal{P}$ is an invariant set for the continuous
system if and only if $Ax^i \in \mathcal{T_P}(x^i)$ and $A\hat {x}^j \in
\mathcal{T_P}(x^i+\hat{x}^j)$ for $i\in \mathcal{I}(\ell_1)$ and $j\in \mathcal{I}(\ell_2)$.  Thus for $i\in\mathcal{I}(\ell_1)$, there exists an
$\varepsilon_i>0, $  such that $x^i+\Delta t Ax^i\in \mathcal{P}$, for every
$\Delta t\in[0,\varepsilon_i].$ For $j\in \mathcal{I}(\ell_2)$ there exists a $\hat\varepsilon_i>0$ such that  $\hat x^j+\Delta t A\hat x^j\in
\mathcal{P}$ for every $\Delta t\in[0, \hat\varepsilon_i]$. Let
$\hat{\tau}=\min\{\varepsilon_1,...,\varepsilon_{\ell_1},
\hat\varepsilon_1,...,\hat\varepsilon_{\ell_2}\}>0$. Then for every
$x_k\in \mathcal{P}$  there exist $\theta_i,
\hat\theta_j\geq0$ with $\sum_{i=1}^\ell\theta_i=1,$ such that  $x_k=\sum_{i=1}^{\ell_1} \theta_i
x^i+\sum_{j=1}^{\ell_2} \hat\theta_j \hat x^j$. Then we have $
x_{k+1}=x_k+\Delta t Ax_k=\sum_{i=1}^{\ell_1} \theta_i(x^i+\Delta t
Ax^i)+\sum_{j=1}^{\ell_2} \hat\theta_j(\hat x^j+\Delta t A\hat
x^j)\in \mathcal{P} $ for every $ \Delta t\in[0,\hat{\tau}]. $
\end{proof}

\begin{corollary}\label{cora1}
Assume that a polyhedral cone $\mathcal{C_P}$ is an invariant set
for the continuous system (\ref{dyna1}). Then there exists a
$\hat\tau>0$, such that for every $x_k\in \mathcal{C_P}$ and $ \Delta
t\in[0, \hat{\tau}]$, we have $x_{k+1} \in \mathcal{C_P}$, where
$x_{k+1}$ is obtained by the forward Euler method (\ref{euler1}).
\end{corollary}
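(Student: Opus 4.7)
The plan is to derive this corollary as a direct specialization of Theorem \ref{thema1}. A polyhedral cone $\mathcal{C_P}$ is precisely a polyhedron whose generator representation (\ref{poly2}) contains no vertex part other than the origin; explicitly, one may write
\[
\mathcal{C_P} = \Bigl\{ x \in \mathbb{R}^n : x = \sum_{j=1}^{\ell_2} \hat{\theta}_j \hat{x}^j, \ \hat{\theta}_j \geq 0 \Bigr\},
\]
by taking $\ell_1 = 1$ and $x^1 = 0$ in (\ref{poly2}), where $\hat{x}^1, \dots, \hat{x}^{\ell_2}$ are the extreme rays of $\mathcal{C_P}$.

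First I would simply invoke Theorem \ref{thema1}: since every polyhedral cone is a polyhedron, the theorem immediately yields a $\hat{\tau} > 0$ such that for every $x_k \in \mathcal{C_P}$ and every $\Delta t \in [0, \hat{\tau}]$, the forward Euler iterate $x_{k+1} = (I + \Delta t A) x_k$ remains in $\mathcal{C_P}$. This alone establishes the claim.

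If one prefers a self-contained argument that mirrors the proof of Theorem \ref{thema1}, the steps are: apply the tangent-cone characterization (Lemma 3.11 in \cite{song1}) to obtain $A \hat{x}^j \in \mathcal{T}_{\mathcal{C_P}}(\hat{x}^j)$ for each $j \in \mathcal{I}(\ell_2)$ (the vertex condition becomes vacuous since the only vertex is the origin); pick $\hat{\varepsilon}_j > 0$ with $\hat{x}^j + \Delta t A \hat{x}^j \in \mathcal{C_P}$ for all $\Delta t \in [0, \hat{\varepsilon}_j]$; set $\hat{\tau} = \min_{j \in \mathcal{I}(\ell_2)} \hat{\varepsilon}_j > 0$; and for any $x_k = \sum_j \hat{\theta}_j \hat{x}^j \in \mathcal{C_P}$ with $\hat{\theta}_j \geq 0$, compute
\[
x_{k+1} = (I + \Delta t A) x_k = \sum_{j=1}^{\ell_2} \hat{\theta}_j \bigl( \hat{x}^j + \Delta t A \hat{x}^j \bigr) \in \mathcal{C_P},
\]
using the fact that $\mathcal{C_P}$ is closed under nonnegative linear combinations of its elements.

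There is no substantive obstacle here; the only point to notice is that the vertex contribution in (\ref{poly2}) collapses to the origin for a cone, so the machinery of Theorem \ref{thema1} specializes cleanly. No new ideas are required, and the uniform threshold $\hat{\tau}$ depends only on $A$ and on the generators $\hat{x}^j$ of $\mathcal{C_P}$.
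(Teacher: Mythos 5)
Your proposal is correct and matches the paper exactly: the paper states this as an immediate corollary of Theorem \ref{thema1} with no separate proof, relying precisely on the observation that a polyhedral cone is a polyhedron in the representation (\ref{poly2}) with trivial vertex part. Your optional self-contained argument is just the specialization of the proof of Theorem \ref{thema1} and is also fine.
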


We now consider the polyhedron and the backward Euler method. Note that a similar result can be found in \cite{horvathfarkasconf2006}. 
\begin{theorem}\label{thema311}
Assume that a polyhedron $\mathcal{P}$,  given as in  (\ref{poly1}),
is an invariant set for the continuous system (\ref{dyna1}).
Then there exists a $\hat\tau>0$, such that for every $x_k\in
\mathcal{P}$ and $\Delta t\in [0, \hat\tau]$, we have $x_{k+1}\in
\mathcal{P}$, where $x_{k+1}$ is obtained by the
backward Euler method (\ref{euler2}), i.e., $\mathcal{P}$ is an
invariant set for the discrete system.
\end{theorem}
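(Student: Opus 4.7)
My plan is to upgrade the pointwise argument of Lemma~\ref{lemma201} to a uniform one by realizing the backward Euler step $(I-\Delta t A)^{-1}x_k$ as a Laplace-type average of the continuous-time trajectory points $e^{As}x_k$, $s\ge 0$. Continuous invariance then hands us $Gx_{k+1}\le b$ almost for free, with a threshold $\hat\tau$ that depends only on $A$, not on $x_k$.

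Concretely, I would first pick any $\omega$ strictly larger than the spectral abscissa of $A$ (for definiteness $\omega:=\|A\|+1$) and set $\hat\tau:=1/\omega$. For $\Delta t\in(0,\hat\tau]$ the Neumann series $\sum_{n\ge 0}(\Delta tA)^n$ converges and equals $(I-\Delta tA)^{-1}$, and a term-by-term comparison with the Taylor series of $e^{As}$ (using $\int_0^\infty e^{-s/\Delta t}\tfrac{s^n}{n!}\,ds=(\Delta t)^{n+1}$) yields the resolvent identity
\begin{equation*}
(I-\Delta t A)^{-1}=\frac{1}{\Delta t}\int_0^\infty e^{-s/\Delta t}\,e^{As}\,ds,
\end{equation*}
which is the starting point. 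The integral converges absolutely because $e^{-s/\Delta t}$ dominates the growth of $e^{As}$ under our choice of $\omega$.

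Next I would test this identity against each row of $G$. For every $x_k\in\mathcal{P}$, continuous invariance gives the scalar bound $G_i^Te^{As}x_k\le b_i$ for all $s\ge 0$ and every row index $i$; integrating against the probability density $\tfrac{1}{\Delta t}e^{-s/\Delta t}$ then gives
\begin{equation*}
G_i^T(I-\Delta t A)^{-1}x_k=\frac{1}{\Delta t}\int_0^\infty e^{-s/\Delta t}\,G_i^Te^{As}x_k\,ds\le\frac{1}{\Delta t}\int_0^\infty e^{-s/\Delta t}\,b_i\,ds=b_i.
\end{equation*}
Ranging over $i$ produces $Gx_{k+1}\le b$, i.e., $x_{k+1}\in\mathcal{P}$, for every $x_k\in\mathcal{P}$ and every $\Delta t\in[0,\hat\tau]$, exactly as required.

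The main obstacle I anticipate is purely technical: justifying the resolvent identity and the term-by-term integration rigorously, and checking that $\hat\tau\le 1/\omega$ is the correct window for absolute convergence. Once that is in place the rest is a scalar inequality applied uniformly, which is the reason this route sidesteps the machinery of extreme points and recession rays (used in Theorem~\ref{thema1}) and avoids taking a finite minimum of unbounded pointwise thresholds as in Lemma~\ref{lemma201}. A pleasant by-product is that only closedness and convexity of $\mathcal{P}$ are actually used, so the same strategy should carry over verbatim to the ellipsoid, Lorenz-cone, and general convex-set results that appear later in Section~\ref{sec:uniform}.
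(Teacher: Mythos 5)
Your proof is correct, and it is a genuinely different argument from the one in the paper. The paper rewrites $x_{k+1}=(I-\Delta t A)^{-1}x_k$ as a convex combination of $x_k$ and the forward Euler step taken \emph{from} $x_{k+1}$ (equation (\ref{eq633})), then feeds in the uniform forward-Euler threshold of Theorem \ref{thema1} (which rests on the vertex/ray representation (\ref{poly2})) and closes with a relative-interior contradiction argument; this yields the sharper window $\hat\tau\in(0,\bar\tau)$ with $\bar\tau=\sup\{\tau\,|\,I-A\Delta t\text{ nonsingular on }[0,\tau]\}$, as recorded in Remark \ref{rempoly}. You instead represent the resolvent as a Laplace average of the flow, $(I-\Delta tA)^{-1}=\tfrac{1}{\Delta t}\int_0^\infty e^{-s/\Delta t}e^{As}\,ds$, so that the backward Euler step is a barycenter of exact trajectory points with respect to a probability density; continuous invariance of each halfspace $G_i^Tx\le b_i$ then transfers to $x_{k+1}$ by monotonicity of the integral. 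Your identity and the term-by-term justification are sound for $\Delta t\le 1/(\|A\|+1)$ (more generally whenever $1/\Delta t$ exceeds the spectral abscissa of $A$), and the argument genuinely uses only that $\mathcal{P}$ is an intersection of halfspaces, i.e., closed and convex, so it does subsume the later results for ellipsoids, Lorenz cones, and proper cones, and it removes the forward-Euler hypothesis that Theorem \ref{thm21} still carries. What you give up is quantitative: your threshold $1/(\|A\|+1)$ is in general strictly smaller than $\bar\tau$, which is controlled only by the real positive eigenvalues of $A$ rather than by $\|A\|$ or the spectral abscissa, so Remark \ref{rempoly} (and the $\bar\tau=\infty$ phenomenon of Theorem \ref{thm:bound4ellp}) is not recovered by your route. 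For the existence statement of Theorem \ref{thema311} itself, your proof is complete.
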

\begin{proof}
Let
$\bar\tau=\sup\{\tau\,|\, I-A\Delta t \text{ is nonsingular for every } \Delta t\in [0,\tau]\}
$, and denote the relative interior  and the relative boundary\footnote{A point $x\in \mathcal{S}$ is called a relative interior point of $\mathcal{S}$ if $x$ is an interior point
of $\mathcal{S}$ relative to aff($\mathcal{S}$), where aff($\mathcal{S}$) is the smallest affine subspace containing $\mathcal{S}$. Then ri$(\mathcal{S})$ is defined as the set of all the relative interior points of $\mathcal{S}$, and $\text{rb}(\mathcal{S})$ is defined as $\text{cl}(\mathcal{S})\backslash \text{ri}(\mathcal{S})$ (see \cite{rock} p. 44). } of a set $\mathcal{S}$ by $\text{ri}(\mathcal{S})$ and $\text{rb}(\mathcal{S})$, respectively. Note that $\mathcal{P}$ is a closed set, thus for every $x_k\in \mathcal{P}$, one has either $x_k\in \text{ri}(\mathcal{P})$ or $x_k\in \text{rb}(\mathcal{P})$. We  consider the following two cases:

\emph{Case} 1). $x_k\in \text{ri}(\mathcal{P})$.
For every $\tau>0,$ we can reformulate $x_{k+1}=(I-A\Delta t)^{-1}x_k$ as $x_{k+1}+\frac{\Delta t}{\tau}x_{k+1}=x_k+\frac{\Delta t}{\tau}(x_{k+1}+\tau Ax_{k+1}),$ i.e.,
\begin{equation}\label{eq633}
x_{k+1}=\tfrac{\tau}{\tau+\Delta t} x_{k}+\tfrac{\Delta t}{\tau+\Delta t}(x_{k+1}+\tau Ax_{k+1}).
\end{equation}
Note that $\frac{\tau}{\tau+\Delta t}+\frac{\Delta t}{\tau+\Delta t}=1,$ $\frac{\tau}{\tau+\Delta t}>0, $ and $\frac{\Delta t}{\tau+\Delta t}>0$, thus $x_{k+1}$ is a convex combination of $x_k$ and $\bar{x}=x_{k+1}+\tau Ax_{k+1}.$ Further, we observe that $\bar{x}$ is the vector obtained by applying the forward Euler method at $x_{k+1}$ with steplength $\tau.$

Now we are going to prove that $x_{k+1}\in \text{ri}(\mathcal{P})$ for every $\Delta t\in [0,\bar{\tau}).$ This proof is by contradiction. Let us assume that there exists a $\bar\tau_1\in [0,\bar\tau)$, such that $x_{k+1}=(I-A\bar\tau_1)x_k\in \text{rb}(\mathcal{P}).$ We now choose a $\tau>0$, which is not larger than the threshold given in Theorem \ref{thema1}, thus we have $\bar{x}\in\mathcal{P}$ and
\begin{equation}\label{eq63311}
x_{k+1}=\tfrac{\tau}{\tau+\bar\tau_1} x_{k}+\tfrac{\bar\tau_1}{\tau+\bar\tau_1}\bar{x},
\end{equation}
which, by noting that $x_k\in\text{ri}(\mathcal{P})$, implies that $x_{k+1}\in \text{ri}(\mathcal{P}).$ This contradicts to the assumption that $x_{k+1}\in \text{rb}(\mathcal{P}).$

\emph{Case} 2). $x_k\in\text{rb}(\mathcal{P}).$ There exists a $y\in \text{ri}(\mathcal{P})$, such that $\bar{x}_k^\epsilon=x_k+\epsilon y\in \text{ri}(\mathcal{P})$, for every $\epsilon\in(0,1).$ By a similar discussion as in Case 1), we have that $\bar{x}_{k+1}^\epsilon=(I-A\Delta t)\bar{x}_k^\epsilon\in\text{ri}(\mathcal{P}),$ for every $\Delta t\in [0,\bar\tau).$ By letting $\epsilon\rightarrow 0$, we have that $\bar{x}_{k+1}^\epsilon\rightarrow x_{k+1}\in \mathcal{P}.$

We prove that every
 $\hat\tau\in (0,\bar\tau)$ satisfies the theorem.
The proof is complete.
\end{proof}

\begin{remark}\label{rempoly}
The proof of Theorem \emph{\ref{thema311}} also quantifies the value of the invariance preserving uniform steplength threshold $\hat\tau$, i.e., $\hat\tau\in(0,\bar\tau),$
where $\bar\tau=\sup\{\tau\,|\, I-A\Delta t$  is nonsingular for every $ \Delta t\in [0,\tau]\}
$. 
\end{remark}

\begin{corollary}\label{thema311ab}
Assume that a polyhedral cone $\mathcal{C_P}$  
is an invariant set for the continuous system (\ref{dyna1}).
Then there exists a $\hat\tau>0$, such that for every $x_k\in
\mathcal{C_P}$ and $\Delta t\in [0, \hat\tau]$, we have $x_{k+1}\in
\mathcal{C_P}$, where $x_{k+1}$ is obtained by the
backward Euler method (\ref{euler2}), i.e., $\mathcal{C_P}$ is an
invariant set for the discrete system.
\end{corollary}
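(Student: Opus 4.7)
The plan is to observe that Corollary \ref{thema311ab} is essentially a direct specialization of Theorem \ref{thema311}. A polyhedral cone $\mathcal{C_P}$ is, by definition, a polyhedron: it admits a representation of the form $\mathcal{C_P}=\{x\in\mathbb{R}^n\,|\,Gx\leq 0\}$ for some matrix $G$, which fits (\ref{poly1}) with $b=0$. Therefore, invoking Theorem \ref{thema311} with $\mathcal{P}=\mathcal{C_P}$ yields immediately the existence of $\hat\tau>0$ such that $(I-A\Delta t)^{-1}x_k\in \mathcal{C_P}$ for every $x_k\in\mathcal{C_P}$ and every $\Delta t\in[0,\hat\tau]$. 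In particular, in light of Remark \ref{rempoly}, one may take any $\hat\tau\in(0,\bar\tau)$, where $\bar\tau=\sup\{\tau\,|\,I-A\Delta t \text{ is nonsingular for all } \Delta t\in[0,\tau]\}$.

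If instead one prefers to restate the argument specialized to cones, the skeleton is identical to the proof of Theorem \ref{thema311}. First, define $\bar\tau$ as above so that $(I-A\Delta t)^{-1}$ exists on $[0,\bar\tau)$. Second, split into two cases according to whether $x_k\in\text{ri}(\mathcal{C_P})$ or $x_k\in\text{rb}(\mathcal{C_P})$. For $x_k$ in the relative interior, use the convex-combination identity
\begin{equation*}
x_{k+1}=\tfrac{\tau}{\tau+\Delta t}\,x_k+\tfrac{\Delta t}{\tau+\Delta t}\,\bigl(x_{k+1}+\tau A x_{k+1}\bigr),
\end{equation*}
choosing $\tau>0$ small enough that the forward Euler step from any point of $\mathcal{C_P}$ with steplength $\tau$ remains in $\mathcal{C_P}$; this is precisely the content of Corollary \ref{cora1}, which replaces the role of Theorem \ref{thema1} in the original proof. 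For $x_k\in\text{rb}(\mathcal{C_P})$, approximate $x_k$ by relative interior points of the form $x_k+\epsilon y$ with $y\in\text{ri}(\mathcal{C_P})$, apply the interior case, and pass to the limit $\epsilon\to 0$ using closedness of $\mathcal{C_P}$.

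I do not anticipate any genuine obstacle: the only substantive ingredient beyond Theorem \ref{thema311} is knowing that the forward Euler method has a uniform invariance preserving steplength on a polyhedral cone, and this is precisely what Corollary \ref{cora1} provides. All remaining steps—nonsingularity of $I-A\Delta t$ on a small interval, the algebraic rearrangement producing the convex combination, and the limit argument on the relative boundary—are identical to the polyhedral case and require no modification for cones.
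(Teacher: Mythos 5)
Your proposal is correct and matches the paper's intent: the corollary is stated there without proof as an immediate specialization of Theorem \ref{thema311}, since a polyhedral cone is a polyhedron (with $b=0$ in (\ref{poly1})), which is exactly your first paragraph. Your optional restatement of the argument with Corollary \ref{cora1} replacing Theorem \ref{thema1} is a faithful transcription of the same proof and adds nothing that needs checking.
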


\begin{theorem}\label{thema3}
Assume that an ellipsoid $\mathcal{E}$, given as in (\ref{elli}),
is an invariant set for the continuous system (\ref{dyna1}). Then
there exists a $\hat\tau>0$, such that for every $x_k\in \mathcal{E}$
and $\Delta t\in[0, \hat\tau]$, we have $x_{k+1}\in \mathcal{E}$,
where $x_{k+1}$ is obtained by the backward Euler method
(\ref{euler2}), i.e., $\mathcal{E}$ is an invariant set for the discrete system.
\end{theorem}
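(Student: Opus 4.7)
The plan is to recast the backward-Euler invariance requirement as a single matrix inequality on $(I - \Delta t A)^T Q (I - \Delta t A) - Q$ and verify it directly using two ingredients: the Lyapunov-type matrix condition that continuous invariance of $\mathcal{E}$ forces on $A$ and $Q$, and the trivial observation that $A^T Q A \succeq 0$. Unlike Lemma \ref{lemma21}, which had to split into three cases depending on where $x_k$ sits relative to $\partial\mathcal{E}$, the matrix-inequality route handles all $x_k \in \mathcal{E}$ simultaneously and yields a \emph{uniform} threshold without any case analysis.

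First I would extract the inequality $A^T Q + Q A \preceq 0$ from the hypothesis. Since $x(t) = e^{A t} x(0)$ and $x^T Q x$ is homogeneous of degree two in $x$, the assumption $x(0) \in \mathcal{E} \Rightarrow x(t) \in \mathcal{E}$ for all $t \geq 0$ is equivalent to the matrix inequality $e^{A^T t} Q e^{A t} \preceq Q$ for every $t \geq 0$; differentiating at $t = 0$ gives the Lyapunov inequality. As a by-product, every eigenvalue of $A$ has non-positive real part, so $I - \Delta t A$ is invertible for every $\Delta t \geq 0$. Next, for any $\Delta t \geq 0$, I combine $-\Delta t\,(A^T Q + Q A) \succeq 0$ with $(\Delta t)^2 A^T Q A \succeq 0$ to obtain
\begin{equation*}
(I - \Delta t A)^T Q (I - \Delta t A) - Q = -\Delta t\,(A^T Q + Q A) + (\Delta t)^2 A^T Q A \succeq 0.
\end{equation*}
Setting $x_{k+1} = (I - \Delta t A)^{-1} x_k$ and sandwiching this inequality between $x_{k+1}^T$ on the left and $x_{k+1}$ on the right yields $x_k^T Q x_k \geq x_{k+1}^T Q x_{k+1}$, so $x_{k+1} \in \mathcal{E}$ whenever $x_k \in \mathcal{E}$.

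I do not expect a genuine obstacle: any $\hat\tau > 0$ (in fact, any $\Delta t \geq 0$) satisfies the theorem, which strengthens the bare existence assertion of the statement and parallels the quantification provided for the polyhedral case in Remark \ref{rempoly}. The only step that deserves care is the passage from continuous invariance to the matrix condition $A^T Q + Q A \preceq 0$; if one prefers to avoid invoking the matrix exponential, the same inequality can be read off from the Taylor-expansion calculation already carried out in the proof of Lemma \ref{lemma21}, where it was established pointwise on $\partial\mathcal{E}$, combined with the homogeneity of $x \mapsto x^T Q x$ to pass to the full matrix inequality.
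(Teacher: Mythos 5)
Your proof is correct, but it takes a different route from the paper's proof of Theorem~\ref{thema3}. The paper proves this theorem by a compactness argument: it invokes the local thresholds $\tau(x_k)$ from Lemma~\ref{lemma21}, shrinks them so that $\|(I-A\Delta t)^{-1}\|$ is uniformly bounded (by controlling $|1-\lambda_i(A)\Delta t|$ via the spectral radius), shows that each open ball $\delta(x_k,\tfrac12 r(x_{k+1}))$ is mapped into a ball contained in $\mathrm{int}(\mathcal{E})$, and then extracts a finite subcover of the compact set $\mathcal{E}$ to obtain a uniform $\hat\tau=\min_k\hat\tau(x_k)>0$. Your argument instead reduces everything to the single matrix inequality $(I-\Delta tA)^TQ(I-\Delta tA)-Q=-\Delta t(A^TQ+QA)+(\Delta t)^2A^TQA\succeq0$, which is exactly the route the paper itself takes later in Theorem~\ref{thm:bound4ellp} (together with Lemmas~\ref{lem:bound4ellp1} and~\ref{lem:bound4ellp2} for the congruence step and the nonsingularity of $I-\Delta tA$). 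The trade-off is clear: your computation is shorter, avoids all case analysis, and delivers the optimal threshold $\hat\tau=\infty$ rather than bare existence; the paper's covering argument, by contrast, is the one that generalizes --- it is precisely the template abstracted in Theorem~\ref{themacor3} to arbitrary compact sets and arbitrary discretization methods, where no quadratic structure is available. The only step in your write-up deserving a word of justification is the equivalence of continuous invariance with $e^{A^Tt}Qe^{At}\preceq Q$ (which uses $Q\succ0$ and homogeneity to normalize $v^TQv=1$) and the differentiation at $t=0$; both are sound, and the resulting Lyapunov inequality $A^TQ+QA\preceq0$ does imply $\mathrm{Re}\,\lambda_i(A)\leq0$ and hence invertibility of $I-\Delta tA$ for all $\Delta t\geq0$, as you claim.
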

\begin{proof}
In the backward Euler method, the coefficient matrix is $(I-A\Delta
t)^{-1}$, where $\Delta t$ is the steplength.  Given any $x_k\in
\mathcal{E}$, according to Lemma \ref{lemma21}, there exists a
$\tau(x_k)>0$, such that $x_{k+1}\in \rm{int}(\mathcal{E})$ for
every $\Delta t\in (0, \tau(x_k)].$  In our proof, we need to bound
the magnitude of the coefficient matrix $(I-A\Delta t)^{-1}$. We
consider the eigenvalues of $(I-A\Delta t)^{-1}$, which are
$(1-\lambda_i(A)\Delta t)^{-1},$ for $i=1,2,...,n.$ To bound
$(1-\lambda_i(A)\Delta t)^{-1}$, we need $|\lambda_i(A)\Delta t|<1.$
Note that any positive $\tau<\tau(x_k)$ is also a bound for $\Delta
t,$ thus, for example,  we can choose $0<\Delta
t\leq\hat\tau(x_k):=\min\{\tau(x_k),\frac{1}{2|\lambda_i(A)|}\}=
\min\{\tau(x_k),\frac{1}{2\rho(A)}\},$ where $\rho(A)$ is the spectral radius (see, e.g. 
\cite{horn}) of $A,$
which yields $|1-\lambda_i(A)\Delta t|\geq\frac{1}{2}$. Thus, we need to
have that $\|(I-A\Delta t)^{-1}\|$ is uniformly bounded by $2$ on
$\mathcal{E}$ for every $\Delta t\in (0,\hat\tau(x_k)].$

Since $x_{k+1}=(I-A\hat\tau(x_k))^{-1}x_k\in
\rm{int}(\mathcal{E})$, we can choose a positive $r(x_{k+1})$,
such that the open ball
$\delta(x_{k+1},r(x_{k+1}))\subset\rm{int}(\mathcal{E})$. It is
easy to verify that the open ball $\delta(x_k,
\frac{1}{2}r(x_{k+1}))$ is mapped into $\delta(x_{k+1},r(x_{k+1}))$
by the backward Euler method. This is because for
$\tilde{x}_k\in\delta(x_k,\frac{1}{2}r(x_{k+1}))$, we apply the
backward Euler method at $\tilde{x}_k$ with $\hat\tau(x_k)$ to yield
$\tilde{x}_{k+1}=(I-A\hat\tau(x_k))^{-1}\tilde{x}_k$. Then we have
\begin{equation*}
\|\tilde{x}_{k+1}-x_{k+1}\|\leq
\|(I-A\hat\tau(x_k))^{-1}\|\|\tilde{x}_k-x_k\|\leq
2\|\tilde{x}_k-{x}_k\|\leq r(x_{k+1}),
\end{equation*}
i.e., $\tilde{x}_{k+1}\in \delta(x_{k+1},r(x_{k+1}))\subset
\rm{int}(\mathcal{E}). $ Therefore, we have that $\hat\tau(x_k)$
is a uniform  bound for $\Delta t$ at every point in $\delta(x_k,
\frac{1}{2}r(x_{k+1})).$

Obviously,
$\cup_{x_k\in\mathcal{E}}\delta(x_k,\frac{1}{2}r(x_{k+1}))$ is an
open cover of the ellipsoid $\mathcal{E}$. Since $\mathcal{E}$ is a
compact set, according to \cite{rudin}, there
exists a finite subcover
$\cup_{k=1}^m\delta(x_k,\frac{1}{2}r(x_{k+1}))$ of $\mathcal{E}$. For each open
ball $\delta(x_k,\frac{1}{2}r(x_{k+1}))$, there is a
uniform  bound $\hat\tau(x_k)$, thus, we have that
$\hat\tau=\min_{k=1,...,m}\{\hat\tau(x_k)\}$ is an invariance preserving  uniform bound for
$\Delta t$ for the backward Euler method at every point in
$\mathcal{E}.$ The proof is complete.
\end{proof}

We now consider to quantify a uniform steplength threshold of the backward Euler method for invariance preserving for ellipsoids. We need some technical results.

\begin{lemma}\label{lem:bound4ellp1}
(\emph{\cite{horn}}) Let $M\succ0$ ($M\succeq0$, $M\prec0$, or $M\preceq0$) and $N$ be a
nonsingular matrix.  Then $N^TMN\succ0$ ($N^TMN\succeq0$,
$N^TMN\prec0$, or $N^TMN\preceq0$).
\end{lemma}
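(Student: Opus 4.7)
The plan is to prove this by a direct congruence argument using the quadratic form definition of (semi)definiteness. The result is a standard fact about congruence transformations preserving the inertia class, and the nonsingularity of $N$ is precisely what is needed to keep the strict inequalities strict.

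First I would note that $N^T M N$ is symmetric whenever $M$ is, since $(N^T M N)^T = N^T M^T N = N^T M N$. This takes care of the symmetry requirement that is implicit in all four definiteness notions.

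Next, for any $x \in \mathbb{R}^n$, I would consider the substitution $y = Nx$ and write
\begin{equation*}
x^T (N^T M N) x = (Nx)^T M (Nx) = y^T M y.
\end{equation*}
From here, each of the four cases follows immediately. In the positive definite case, for $x \neq 0$ the nonsingularity of $N$ gives $y = Nx \neq 0$, hence $y^T M y > 0$, so $N^T M N \succ 0$. In the positive semidefinite case, $y^T M y \geq 0$ for every $x$, hence $N^T M N \succeq 0$; the nonsingularity of $N$ is not even required here. The two negative cases are handled identically by reversing inequalities, or equivalently by applying the positive cases to $-M$.

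The only step that requires care is the strict case, where one must invoke nonsingularity of $N$ to guarantee that $x \neq 0$ implies $Nx \neq 0$; without this the conclusion would collapse to semidefiniteness. There is no real obstacle here, so the proof is essentially a one-line change-of-variable verification stated four times. I would therefore present it compactly by proving the $\succ 0$ case in full and remarking that the remaining three cases follow by the same argument.
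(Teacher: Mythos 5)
Your proof is correct and is the standard congruence argument; the paper itself provides no proof here, citing the result directly from Horn and Johnson \cite{horn}, so there is nothing in the paper to compare against. You have supplied exactly the textbook verification: check symmetry, substitute $y=Nx$, and use nonsingularity of $N$ to preserve strictness in the definite cases. Your side remark that nonsingularity is not needed for the semidefinite conclusions is also accurate.
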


\begin{lemma}\label{lem:bound4ellp2}
If $Q\succ0, A^TQ+QA\preceq 0,$ then
\begin{itemize}
\itemsep -2 mm
  \item  for $P=QA$, we have  $x^TPx\leq0$ for every $ x\in
\mathbb{R}^n$.
  \item for every $t\geq0,$ $I-At$ is nonsingular.
\end{itemize}
\end{lemma}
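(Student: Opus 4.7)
The plan is to handle the two bullets in sequence, using the first to drive the second via an eigenvalue argument.

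For the first bullet, I would observe that $x^{T}A^{T}Qx$ is a scalar, hence equals its own transpose $x^{T}Q^{T}Ax = x^{T}QAx$ (using $Q = Q^{T}$). Therefore
\[
x^{T}(A^{T}Q + QA)x \;=\; 2\,x^{T}QAx \;=\; 2\,x^{T}Px,
\]
and the hypothesis $A^{T}Q + QA \preceq 0$ gives $x^{T}Px \leq 0$ for every $x \in \mathbb{R}^{n}$.

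For the second bullet, the case $t = 0$ is immediate since $I - At = I$. For $t > 0$, I would argue by contradiction: suppose $I - At$ is singular. Then $\det(I - At) = 0$, so $1/t$ is a (real, positive) eigenvalue of $A$, and one can pick a corresponding real eigenvector $v \neq 0$ with $Av = v/t$. Substituting into the inequality from the first bullet,
\[
0 \;\geq\; v^{T}Pv \;=\; v^{T}QAv \;=\; \tfrac{1}{t}\,v^{T}Qv,
\]
which forces $v^{T}Qv \leq 0$; but $Q \succ 0$ and $v \neq 0$ imply $v^{T}Qv > 0$, a contradiction. Hence $I - At$ is nonsingular for all $t \geq 0$.

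The only subtle point is ensuring that the hypothetical eigenvector can be chosen real; this is automatic since $1/t$ is a real eigenvalue of the real matrix $A$, so its eigenspace is spanned by real vectors. Aside from that, the argument is a direct consequence of the symmetrization trick in the first bullet combined with the positive definiteness of $Q$, and I do not anticipate any technical obstacle.
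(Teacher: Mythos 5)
Your proof is correct and follows essentially the same route as the paper: the first bullet is the identical symmetrization $2x^TPx = x^T(A^TQ+QA)x$, and the second bullet reaches the same contradiction by evaluating the quadratic forms of $Q$ and $P$ on a null vector of $I-tA$ (the paper phrases this via the factorization $I-At = Q^{-1}(Q-tP)$ rather than eigenvalue language, but the null vector and the resulting inequality are the same). No gaps.
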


\begin{proof}
For  $x\neq0,$ 
$
2x^TPx=2x^T(QA)x=x^T(A^TQ+QA)x\leq 0,
$
that proves the first part.

For the second part, since
$
I-At=I-tQ^{-1}P=Q^{-1}(Q-tP),
$
the singularity of $I-At$ is equivalent to that of $Q-tP.$ Assume that the latter one is singular. Then there exists an
$x\neq0$, such that $(Q-tP)x=0$. Then
$
0=x^T(Q-tP)x=x^TQx-tx^TPx>0,
$
where the last inequality is due to  $Q\succ0$ and the first part.
This is a contradiction, thus the proof is complete. 
\end{proof}

The following theorem presents a uniform invariance preserving   steplength threshold of the backward Euler method for ellipsoids. The form of the threshold coincides with the one for polyhedra given in Remark \ref{rempoly}. Further, the uniform steplength threshold is proved to  be $\infty.$

\begin{theorem}\label{thm:bound4ellp}
Assume that an ellipsoid $\mathcal{E}$ is an
invariant set for the continuous system (\ref{dyna1}). Let
$\bar\tau=\sup\{\tau\,|\, I-A\Delta t \text{ is nonsingular for every } \Delta t\in [0,\tau]\}
$. Then $\bar\tau=\infty$ and thus for every $x_k\in \mathcal{C}$ and $\Delta t\geq0$, we
have that $x_{k+1}\in \mathcal{E}$, where $x_{k+1}$ is obtained by the
backward Euler method (\ref{euler2}), i.e., $\mathcal{E}$ is an
invariant set for the discrete system.
\end{theorem}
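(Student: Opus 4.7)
The plan is to combine the Lyapunov characterization of continuous invariance for ellipsoids with the algebraic identity that relates $x_k^T Q x_k$ to $x_{k+1}^T Q x_{k+1}$ through the map $x_k = (I - \Delta t A) x_{k+1}$. First I would derive the matrix inequality $A^T Q + QA \preceq 0$ from the hypothesis that $\mathcal{E}$ is invariant for $\dot x = Ax$. Inequality (\ref{eq223}) already gives $x^T(A^T Q + QA) x \leq 0$ for every $x$ on $\partial \mathcal{E}$ (i.e., $x^T Q x = 1$); a standard scaling argument $x \mapsto x/\sqrt{x^T Q x}$ promotes this pointwise bound to all nonzero $x$, hence to the matrix inequality $A^T Q + QA \preceq 0$.

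With this in hand, I would invoke Lemma \ref{lem:bound4ellp2} directly: since $Q \succ 0$ and $A^T Q + QA \preceq 0$, the matrix $I - A\Delta t$ is nonsingular for every $\Delta t \geq 0$. Therefore $\bar\tau = \infty$, which disposes of the first claim of the theorem.

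For the invariance-preserving part, I would work with $x_{k+1}$ as the independent variable. From $x_k = (I - \Delta t A) x_{k+1}$, a direct expansion gives
\begin{equation*}
x_k^T Q x_k = x_{k+1}^T Q x_{k+1} - \Delta t\, x_{k+1}^T (A^T Q + QA) x_{k+1} + (\Delta t)^2\, x_{k+1}^T A^T Q A\, x_{k+1}.
\end{equation*}
The coefficient of $\Delta t$ is nonnegative because $-(A^T Q + QA) \succeq 0$, and the coefficient of $(\Delta t)^2$ is nonnegative because $Q \succ 0$ forces $A^T Q A \succeq 0$. Consequently $x_{k+1}^T Q x_{k+1} \leq x_k^T Q x_k \leq 1$, so $x_{k+1} \in \mathcal{E}$, and this holds for every $\Delta t \geq 0$.

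The main obstacle, if there is one, is purely a matter of setup rather than difficulty: the identity must be written with $x_{k+1}$ on both sides (not with $x_k$), since applying $(I - \Delta t A)^{-T} Q (I - \Delta t A)^{-1}$ directly would involve the resolvent and yield no obvious sign information. Once the identity is in the ``backward'' form above, both residual terms are visibly nonnegative, and the result follows without any restriction on $\Delta t$.
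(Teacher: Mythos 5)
Your proposal is correct and follows essentially the same route as the paper: establish $A^TQ+QA\preceq 0$ from continuous invariance, get $\bar\tau=\infty$ from Lemma \ref{lem:bound4ellp2}, and then observe that $Q-(I-\Delta t A)^TQ(I-\Delta t A)=\Delta t(A^TQ+QA)-(\Delta t)^2A^TQA\preceq 0$. The only cosmetic differences are that you evaluate this quadratic form pointwise at $x_{k+1}$ instead of invoking the congruence Lemma \ref{lem:bound4ellp1}, and you rederive $A^TQ+QA\preceq 0$ from (\ref{eq223}) by scaling rather than citing the characterization from \cite{song1}; both are valid.
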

\begin{proof}
According to \cite{song1}, we have that $\mathcal{E}$ is an invariant set for the discrete
and continuous systems  if and only if $A^TQA-Q\preceq0$ and $A^TQ+QA\preceq0$, respectively. Then by Lemma \ref{lem:bound4ellp2}, we have that $\hat\tau=\infty.$ It is
easy to see that the theorem is equivalent to that $A^TQ+QA\preceq0$ implies that 
\begin{equation}\label{rdeq12}
(I-tA)^{-T}Q(I-tA)^{-1}-Q\preceq0
\end{equation}
holds for every $t\geq0,$ According to Lemma \ref{lem:bound4ellp1}, to prove (\ref{rdeq12}) is equivalent to prove 
$
Q-(I-tA)^TQ(I-tA)\preceq0,
$
i.e., 
\begin{equation}\label{eq98620}
A^TQ+QA-tA^TQA\preceq0.
\end{equation} 
Since $Q\succ0$, we have $A^TQA\succeq0,$ thus (\ref{eq98620}) is true. The proof is complete. 
\end{proof}

By using an analogous discussion as the one presented in the proof of Theorem \ref{thm:bound4ellp}, one can show that other discretization methods, e.g.,  Pad\'e[1,1], Pad\'{e}[2,2], etc., see e.g., \cite{baker},  also allow some uniform  invariance preserving steplength thresholds.

To establish a uniform invariance preserving steplength threshold for the backward Euler method for the Lorenz cone
$\mathcal{C_L}$, we first consider the case when no
eigenvector of the coefficient matrix $A$ in (\ref{dyna1}) is on the
boundary of $\mathcal{C_L}$.

\begin{theorem}\label{thm123}
Assume that a Lorenz cone $\mathcal{C_L}$, given as in 
(\ref{ellicone}), is an invariant set for the continuous system
(\ref{dyna1}), and no eigenvector of the coefficient matrix
$A$ in (\ref{dyna1}) is on $\partial(\mathcal{C_L}).$ Then there exists
a $\hat\tau>0$, such that for every $x_k\in \mathcal{C_L}$ and $\Delta
t\in[0, \hat\tau]$, we have $x_{k+1}\in \mathcal{C_L}$, where
$x_{k+1}$ is obtained by the backward Euler method (\ref{euler2}),
i.e., $\mathcal{C_L}$ is an invariant set for the discrete system.
\end{theorem}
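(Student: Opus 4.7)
The plan is to adapt the compactness argument from Theorem \ref{thema3} to the Lorenz cone setting, with two essential modifications: using homogeneity to reduce from the unbounded $\mathcal{C_L}$ to a compact cross-section, and invoking the no-eigenvector hypothesis to exclude the boundary-trapping subcase exposed in the proof of Lemma \ref{lemma22}. Since the backward Euler map $x \mapsto (I-A\Delta t)^{-1} x$ is linear and $\mathcal{C_L}$ is a cone, the condition $(I-A\Delta t)^{-1} x_k \in \mathcal{C_L}$ depends only on the ray through $x_k$. Hence it suffices to find a uniform $\hat\tau$ valid on the compact cross-section $\mathcal{S} := \mathcal{C_L} \cap \{x : \|x\|=1\}$; the same $\hat\tau$ is then valid on all of $\mathcal{C_L}$ by scaling.

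For each $x_k \in \mathcal{S}$, Lemma \ref{lemma22} supplies a local threshold $\tau(x_k) > 0$. The hypothesis that no eigenvector of $A$ lies on $\partial \mathcal{C_L}$ rules out the only subcase in the proof of Lemma \ref{lemma22} in which $Ax_k \in \partial \mathcal{C_L}$ forces $x_k$ to be an eigenvector and keeps the iterate on the boundary. In every remaining case one can shrink $\tau(x_k)$ if necessary so that $x_{k+1}(\tau(x_k)) \in \text{int}(\mathcal{C_L})$, since the analysis in the proof of Lemma \ref{lemma22} produces a strict inequality $x_{k+1}^T Q x_{k+1} < 0$ for sufficiently small $\Delta t > 0$. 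As in Theorem \ref{thema3}, I would additionally impose $\tau(x_k) \leq 1/(2\rho(A))$ so that $\|(I - A\Delta t)^{-1}\| \leq 2$ uniformly over $\Delta t \in [0, \tau(x_k)]$. The openness of $\text{int}(\mathcal{C_L})$ then produces a radius $r_k > 0$ with $\delta(x_{k+1}(\tau(x_k)), r_k) \subset \text{int}(\mathcal{C_L})$, and the norm bound guarantees that $\delta(x_k, r_k/2)$ is mapped into this interior ball by $(I - A\tau(x_k))^{-1}$.

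Since $\{\delta(x_k, r_k/2) : x_k \in \mathcal{S}\}$ is an open cover of the compact set $\mathcal{S}$, we may extract a finite subcover indexed by $\{x^{(1)}, \ldots, x^{(m)}\}$ and set $\hat\tau := \min_{j} \tau(x^{(j)}) > 0$. This $\hat\tau$ is a uniform invariance-preserving steplength threshold on $\mathcal{S}$, and, by homogeneity, on all of $\mathcal{C_L}$.

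The main obstacle, inherited directly from the ellipsoid proof, is verifying that the transferred threshold $\tau(x_k)$ preserves invariance not only at the single value $\Delta t = \tau(x_k)$ but for every $\Delta t \in [0, \tau(x_k)]$. This requires joint continuity of $(x, \Delta t) \mapsto (I-A\Delta t)^{-1} x$ on the compact domain $(\overline{\delta(x_k, r_k/2)} \cap \mathcal{C_L}) \times [0, \tau(x_k)]$, together with a uniform positive interior margin for $x_{k+1}(\Delta t)$ on each sub-interval $[\delta, \tau(x_k)]$ that is afforded by the no-eigenvector hypothesis; a separate treatment near $\Delta t = 0$, which relies on the fact that $\tilde x_k$ itself already lies in $\mathcal{C_L}$ and that $(I - A\Delta t)^{-1}$ approaches the identity, then closes the argument. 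Concealing these continuity details within the open-cover machinery---exactly as in the proof of Theorem \ref{thema3}---is the step most likely to demand extra care.
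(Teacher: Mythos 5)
Your proposal follows essentially the same route as the paper: the paper likewise reduces to a compact base $\mathcal{C_L^+}=\mathcal{H}\cap\mathcal{C_L}$ of the cone (your unit-sphere cross-section plays the identical role), uses the no-eigenvector hypothesis to exclude exactly the boundary-trapping Case 4 from the proof of Lemma \ref{lemma22} so that $x_{k+1}\in\mathrm{int}(\mathcal{C_L})$ on the base, runs the open-cover compactness argument of Theorem \ref{thema3}, and then extends the uniform threshold to all of $\mathcal{C_L}$ by homogeneity of the linear map $(I-A\Delta t)^{-1}$. The uniformity-in-$\Delta t$ subtlety you flag at the end is inherited verbatim from the paper's Theorem \ref{thema3} and is left implicit there as well, so your treatment is, if anything, the more explicit one.
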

\begin{proof}
If $x_k=0$, then for every $\Delta t\geq0$ we have $x_{k+1}=0\in
\mathcal{C_L}.$ We now consider the case when $x_k\neq0$. Our
proof has two steps.

\emph{The first step} of the proof is considering a uniform bound for
$\Delta t$ on a base (see \cite{barvinok} page 66) of the Lorenz
cone $\mathcal{C_L}.$  For every $x_k\in \mathcal{C_L}$, to have
$x_{k+1}=(I-A\Delta t)^{-1}x_k\in \mathcal{C_L},$  we need to have
\begin{equation}\label{eqqq}
\begin{split}
x_{k+1}^TQx_{k+1}=&x_k^TQx_k+\Delta t x_k^T(A^TQ+QA)x_k\\ 
&+(\Delta
t)^2x_k^T(A^{2T}Q+A^TQA+QA^2)x_k+\cdots\leq 0.
\end{split}
\end{equation}
Let us take a hyperplane $\mathcal{H}$ such that $\mathcal{H}$ intersected with
$\mathcal{C_L}$ is a compact set
$0\notin\mathcal{C_L^+}=\mathcal{H}\cap\mathcal{C_L}$. In fact, $\mathcal{C_L^+}$
is a base\footnote{In
practice, a possible way to obtain a base can be chosen as follows: we
first take a hyperplane through the origin that intersects
$\mathcal{C}$ only by the origin. Then shift the hyperplane
to $x^*$, where $x^*$ is an interior point of $\mathcal{C}$.
The intersection of the shifted hyperplane and $\mathcal{C}$ is a base of
$\mathcal{C}$. The base of $\mathcal{C}$ is a compact set.}  of cone $\mathcal{C_L}$. Then
$\mathcal{C_L^+}$ is a base of the Lorenz cone $\mathcal{C_L}.$ For
every $x_k\in \mathcal{C_L^+}$, we consider the following four cases:

  \emph{Case} 1): In this case,
  $x_k\in\rm{int}(\mathcal{C_L})\cap\mathcal{C_L^+}$, thus we have
  $x_k^TQx_k<0.$ Consequently, due to (\ref{eqqq}), $x_{k+1}\in \rm{int}(\mathcal{C_L})$
  for sufficiently small $\Delta t$.

  \emph{Case} 2): In this case, $x_k\in\partial(\mathcal{C_L})\cap\mathcal{C_L^+}$, and
  $x_k^T(A^TQ+QA)x_k<0$, thus we have $x_k^TQx_k=0.$ Since the constant term
  is zero and the first order term is negative in (\ref{eqqq}),  we have
  $x_{k+1}\in \rm{int}(\mathcal{C_L})$ for sufficiently small $\Delta t$.

  \emph{Case} 3): In this case, $x_k\in\partial(\mathcal{C_L})\cap\mathcal{C_L^+}$,  $x_k^T(A^TQ+QA)x_k=0$,
  and
  $Ax_k\notin\partial(\mathcal{C_L})\cup(-\partial(\mathcal{C_L})),$
  thus we have $x_k^TQx_k=0,$ and $x_k^T(A^{2T}Q+A^TQA+QA^2)x_k<0$.
  The last inequality is due to the proof of Lemma \ref{lemma22}. Since the constant
  term is zero, the first order term is also zero, and the second order term is negative
   in (\ref{eqqq}),  we have
  $x_{k+1}\in \rm{int}(\mathcal{C_L})$ for sufficiently small $\Delta t$.

\emph{Case} 4): In this case,
$x_k\in\partial(\mathcal{C_L})\cap\mathcal{C_L^+}$,
$x_k^T(A^TQ+QA)x_k=0$, and
$Ax_k\in\partial(\mathcal{C_L})\cup(-\partial(\mathcal{C_L})).$
However, since $x_k$ is nonzero, we have seen in the proof of Lemma
\ref{lemma22} that in this case $x_k$ is an eigenvector of $A$. This
violates the assumption of this theorem, thus this case is
not possible.

Therefore, for every $x_k\in\mathcal{C_L^+},$ there exists a
$\tau(x_k)>0,$ such that $x_{k+1}\in\rm{int}(\mathcal{C_L})$ for
every $\Delta t\in (0,\tau(x_k)].$ Also, note that $\mathcal{C_L^+}$
is a compact set, thus, according to a similar argument as in the
proof of Theorem \ref{thema3}, we have a uniform bound for $\Delta
t$, denoted by $\hat\tau(\mathcal{C_L^+}),$ on $\mathcal{C_L+},$
such that for every $x_k\in \mathcal{C_L^+},$ we have $x_{k+1}\in
\mathcal{C_L}$ for every $\Delta t\in
[0,\hat\tau(\mathcal{C_L^+})].$

\emph{The second step} of the proof is extending the uniform  bound
of the steplength from $\mathcal{C_L^+}$ to $\mathcal{C_L}.$ Let
$0\neq x_k\in \mathcal{C_L}.$ Then, because $\mathcal{C_L^+}$ is a
base of $\mathcal{C_L},$ there exists a scalar $\gamma>0$, such that
$\gamma x_k= \tilde{x}_k\in \mathcal{C_L^+}.$ Then we have
\begin{equation*}
x_{k+1}=(I-A\Delta t)^{-1}x_k=(I-A\Delta t)^{-1}{\gamma^{-1}}
\tilde{x}_k={\gamma^{-1}} \tilde{x}_{k+1}.
\end{equation*}
Since $\tilde{x}_{k+1}\in \mathcal{C_L}$ for every $\Delta t\in
[0,\tau(\mathcal{C_L^+})],$  we have $x_{k+1}\in \mathcal{C_L}$, for
every $\Delta t\in [0,\tau(\mathcal{C_L^+})].$

Therefore, $\tau(\mathcal{C_L^+})$ is a uniform  bound for the
steplength $\Delta t$ for the backward Euler method at every point of
$\mathcal{C_L}$.
 The proof is complete.
\end{proof}

Now, in a more general setting, we consider the uniform invariance preserving steplength threshold on a general  proper cone for linear dynamical systems. 
\begin{definition}
[\emph{\cite{loewy}}] A convex cone $\mathcal{C}$ is called \textbf{proper} if it is nonempty, closed, and pointed.
\end{definition}

We recall
the concept of a matrix to be cross-positive on a proper cone, which
is first proposed by Schneider and Vidyasagar in \cite{schne}.

\begin{definition}
[\emph{\cite{schne}}] Let $\mathcal{C}\in\mathbb{R}^n$ be a proper cone and $\mathcal{C}^*$ be the dual cone\footnote{The dual of cone $\mathcal{C}$ is defined as $\mathcal{C}^*=\{y\,|\,y^Tx\geq0\text{ for all } x\in\mathcal{C}\}.$} of $\mathcal{C}$.
The matrix $M\in \mathbb{R}^{n\times n}$ is called
\textbf{cross-positive} on $\mathcal{C}$ if for all $x\in
\mathcal{C}, y\in\mathcal{C}^*$ with $x^Ty=0$,  the inequality $x^TMy\geq0$ holds.
\end{definition}

The properties of cross-positive matrices are thoroughly 
studied in \cite{schne}. The following lemma, which directly follows from
Theorem 2 and Lemma 6 in \cite{schne}, is useful in our analysis.

\begin{lemma}\label{lemma51}
(\emph{\cite{schne}})  Let $\mathcal{C}\in\mathbb{R}^n$ be a proper cone, and
denote the following two sets of matrices: $\Sigma_\mathcal{C}=\{M\,|\, M \text{ is cross-positive on }
\mathcal{C}\,\}$, and $ \Pi_\mathcal{C}=\{M\,|\, (M+\alpha I)
(\mathcal{C}\backslash \{0\})\subseteq\rm{int}(\mathcal{C}) \text{
for some }\alpha\geq0\,\}.$ Then the closure of $\Pi_\mathcal{C}$ is
$\Sigma_\mathcal{C}.$
\end{lemma}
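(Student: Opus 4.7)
The plan is to establish $\overline{\Pi_{\mathcal{C}}} = \Sigma_{\mathcal{C}}$ by proving the two inclusions separately, with the forward direction elementary and the reverse direction carrying the real content.

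For $\Pi_{\mathcal{C}} \subseteq \Sigma_{\mathcal{C}}$, I would take $M \in \Pi_{\mathcal{C}}$ with witnessing shift $\alpha \geq 0$. Given any pair $x \in \mathcal{C}$, $y \in \mathcal{C}^*$ with $x^T y = 0$, the membership $(M+\alpha I)x \in \text{int}(\mathcal{C}) \subseteq \mathcal{C}$ together with $y \in \mathcal{C}^*$ forces $y^T(M+\alpha I)x \geq 0$, and subtracting $\alpha y^T x = 0$ gives the required cross-positivity inequality in the form $y^T M x \geq 0$ (the bilinear form in the stated definition is handled identically after transposing, since $x^T M y = (M^T x)^T y$). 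Because $\Sigma_{\mathcal{C}}$ is an intersection of closed half-spaces in $\mathbb{R}^{n\times n}$ indexed by admissible $(x,y)$ pairs, it is closed, and hence $\overline{\Pi_{\mathcal{C}}} \subseteq \Sigma_{\mathcal{C}}$.

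For $\Sigma_{\mathcal{C}} \subseteq \overline{\Pi_{\mathcal{C}}}$ my plan is a perturbation-and-shift argument. Fix an auxiliary matrix $B$ with $B(\mathcal{C}\setminus\{0\}) \subseteq \text{int}(\mathcal{C})$; such a $B$ is easy to construct for a proper cone, e.g., $B = e \xi^T$ with $e \in \text{int}(\mathcal{C})$ and $\xi \in \text{int}(\mathcal{C}^*)$. For $M \in \Sigma_{\mathcal{C}}$ and small $\epsilon > 0$, set $M_\epsilon = M + \epsilon B$ and aim to show $M_\epsilon \in \Pi_{\mathcal{C}}$; then $M = \lim_{\epsilon\to 0^+} M_\epsilon \in \overline{\Pi_{\mathcal{C}}}$. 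The intermediate step is to locate $\alpha \geq 0$ with $(M+\alpha I)\mathcal{C} \subseteq \mathcal{C}$, so that adding $\epsilon B$ strictly pushes nonzero images into $\text{int}(\mathcal{C})$, giving $(M_\epsilon + \alpha I)(\mathcal{C}\setminus\{0\}) \subseteq \text{int}(\mathcal{C})$, i.e., $M_\epsilon \in \Pi_{\mathcal{C}}$.

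The main obstacle is precisely that intermediate step: passing from the tangential boundary inequality defining cross-positivity to the global invariance statement $(M+\alpha I)\mathcal{C}\subseteq\mathcal{C}$. This is the heart of the Schneider--Vidyasagar characterization that cross-positive matrices generate $\mathcal{C}$-preserving semigroups $e^{tM}\mathcal{C} \subseteq \mathcal{C}$. The argument is a Nagumo-type tangency principle adapted to cones: the complementary pairs $(x,y)$ with $x^T y = 0$ play the role of boundary-point/outward-normal pairs, and the inequality $y^T M x \geq 0$ says that $Mx$ is not strictly outward-pointing at $x$, so the flow $\dot x = Mx$ cannot exit $\mathcal{C}$; scaling by a large enough $\alpha$ then transfers semigroup invariance to the matrix invariance $(M+\alpha I)\mathcal{C}\subseteq\mathcal{C}$. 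Once this tool is granted, the closure equality is immediate from the two inclusions.
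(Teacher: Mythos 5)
Your forward inclusion is fine: for $M\in\Pi_{\mathcal{C}}$ with shift $\alpha$, the inequality $y^T(M+\alpha I)x\geq0$ for $x\in\mathcal{C}$, $y\in\mathcal{C}^*$ with $x^Ty=0$ gives cross-positivity, and $\Sigma_{\mathcal{C}}$ is closed as an intersection of closed half-spaces, so $\overline{\Pi_{\mathcal{C}}}\subseteq\Sigma_{\mathcal{C}}$. (The paper does not prove this lemma at all; it only cites Schneider--Vidyasagar, so the comparison here is with a correct argument rather than with the paper's text.)

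The reverse inclusion, however, rests on a false intermediate claim. You assert that every cross-positive $M$ admits an $\alpha\geq0$ with $(M+\alpha I)\mathcal{C}\subseteq\mathcal{C}$, i.e.\ that semigroup invariance $e^{tM}\mathcal{C}\subseteq\mathcal{C}$ can be transferred to matrix invariance by taking $\alpha$ large. That transfer is valid for polyhedral cones but fails for general proper cones---and it fails precisely for the Lorenz cone central to this paper. Take $\mathcal{C}=\{(\xi,\eta,\zeta):\xi^2+\eta^2\leq\zeta^2,\ \zeta\geq0\}$ and $M(\xi,\eta,\zeta)=(-\eta,\xi,0)$, the rotation generator about the axis. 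Then $e^{tM}$ is a rotation preserving $\mathcal{C}$, so $M\in\Sigma_{\mathcal{C}}$; but for a boundary point $(\alpha\xi-\eta)^2+(\alpha\eta+\xi)^2=(\alpha^2+1)\zeta^2>(\alpha\zeta)^2$, so $(M+\alpha I)x\notin\mathcal{C}$ for every $\alpha\geq0$. Hence no shift works, and your perturbation $M_\epsilon=M+\epsilon B$ cannot be placed in $\Pi_{\mathcal{C}}$ by this route. The repair is to approximate $M$ rather than shift it: for $t>0$ write $t^{-1}(e^{tM}-I)=B_t-t^{-1}I$ with $B_t=t^{-1}e^{tM}$ satisfying $B_t\mathcal{C}\subseteq\mathcal{C}$; then $t^{-1}(e^{tM}-I)+\epsilon B\in\Pi_{\mathcal{C}}$ with witness $\alpha=t^{-1}$, because $B_tx+\epsilon Bx\in\mathcal{C}+\mathrm{int}(\mathcal{C})\subseteq\mathrm{int}(\mathcal{C})$ for $x\neq0$ (your choice $B=e\xi^T$ serves exactly this purpose), and letting $t,\epsilon\to0^+$ gives $M\in\overline{\Pi_{\mathcal{C}}}$. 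This still uses the Schneider--Vidyasagar equivalence between cross-positivity and $e^{tM}\mathcal{C}\subseteq\mathcal{C}$, which you correctly identified as the essential input, but the deduction of the lemma from it must go through approximation, not through a uniform shift of $M$ itself.
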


\begin{lemma}\label{lemma52}
Let $\mathcal{C}\in\mathbb{R}^n$ be a proper cone, and denote 
$\Omega_\mathcal{C}=\{M\,|\, M\mathcal{C}\subseteq\mathcal{C}\}$.
Then $\Omega_\mathcal{C}$ is closed.
\end{lemma}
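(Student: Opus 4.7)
The plan is to show closedness directly from the sequential characterization: take any convergent sequence $\{M_k\} \subseteq \Omega_\mathcal{C}$ with $M_k \to M$ (say, in the operator or Frobenius norm on $\mathbb{R}^{n \times n}$) and verify that the limit $M$ lies in $\Omega_\mathcal{C}$, i.e., that $M\mathcal{C} \subseteq \mathcal{C}$.

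To verify this, I would fix an arbitrary $x \in \mathcal{C}$ and consider the sequence $\{M_k x\}$. Since each $M_k \in \Omega_\mathcal{C}$, we have $M_k x \in \mathcal{C}$ for every $k$. Convergence of $M_k$ to $M$ (together with boundedness of the norms of $M_k$) gives $M_k x \to M x$ in $\mathbb{R}^n$. Because $\mathcal{C}$ is a proper cone, it is by definition closed, so the limit $Mx$ must also lie in $\mathcal{C}$. Since $x \in \mathcal{C}$ was arbitrary, $M\mathcal{C} \subseteq \mathcal{C}$, hence $M \in \Omega_\mathcal{C}$.

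There is no real obstacle here; the only property of $\mathcal{C}$ that is used is its closedness, which is part of the definition of a proper cone (the properties of being convex and pointed are not needed for this lemma). The continuity of the map $M \mapsto Mx$ for fixed $x$ is the only analytic ingredient, and it is immediate from the inequality $\|M_k x - M x\| \le \|M_k - M\| \, \|x\|$. Thus $\Omega_\mathcal{C}$ is sequentially closed, and since $\mathbb{R}^{n \times n}$ is a finite-dimensional normed space, this is equivalent to being closed.
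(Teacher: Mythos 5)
Your proof is correct and follows essentially the same route as the paper: take a convergent sequence in $\Omega_\mathcal{C}$, fix an arbitrary $x\in\mathcal{C}$, and use continuity of $M\mapsto Mx$ together with closedness of $\mathcal{C}$ to conclude that the limit matrix maps $x$ into $\mathcal{C}$. Your added remarks (that only closedness of $\mathcal{C}$ is used, and that sequential closedness suffices in finite dimensions) are accurate but not needed beyond what the paper already does.
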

\begin{proof}
Let $\{M_i\}$ be a sequence of matrices in $\Omega_\mathcal{C}$,
such that $\lim_{i\rightarrow\infty}M_i=M.$ We choose an arbitrary
$x\in\mathcal{C}$. For every $i$, since
$M_i\mathcal{C}\subseteq\mathcal{C}$, we have $M_ix=y_i\in \mathcal{C}$. Since $\mathcal{C}$
is closed, we have
$Mx=\lim_{i\rightarrow\infty}M_ix=\lim_{i\rightarrow\infty}y_i=\bar{y}\in\mathcal{C}$.
The proof is complete.
\end{proof}

The existence of a uniform invariance preserving steplength threshold  for a proper
cone is presented in the following theorem.

\begin{theorem}\label{thm12345}
Assume that a proper cone $\mathcal{C}\in\mathbb{R}^n$ is an
invariant set for the continuous system (\ref{dyna1}). Then there
exists a $\hat\tau>0$, such that for every $x_k\in \mathcal{C}$ and
$\Delta t\in[0, \hat\tau]$, we have $x_{k+1}\in \mathcal{C}$, where
$x_{k+1}$ is obtained by the backward Euler method (\ref{euler2}),
i.e., $\mathcal{C}$ is an invariant set for the discrete system.
\end{theorem}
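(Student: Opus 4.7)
My plan is to reduce the problem to matrices that admit a nonnegative shift into $\Omega_\mathcal{C}$, analyse the backward Euler resolvent of each such matrix by a Neumann series whose terms all live in $\Omega_\mathcal{C}$, and finally pass to the limit using the closedness supplied by Lemma~\ref{lemma52}. The invariance hypothesis on $\mathcal{C}$ for the continuous system~(\ref{dyna1}) translates (via $x(t)=e^{At}x(0)$ and the Schneider--Vidyasagar characterization in \cite{schne}) into $A$ being cross-positive on $\mathcal{C}$, that is $A\in\Sigma_\mathcal{C}$. Lemma~\ref{lemma51} then furnishes a sequence $A_i\to A$ with each $A_i\in\Pi_\mathcal{C}$, so that $B_i:=A_i+\alpha_i I$ satisfies $B_i(\mathcal{C}\setminus\{0\})\subseteq\mathrm{int}(\mathcal{C})$ for some $\alpha_i\geq 0$; in particular $B_i\in\Omega_\mathcal{C}$ and, by iteration, $B_i^k\in\Omega_\mathcal{C}$ for every integer $k\geq 0$.

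Fix $\hat\tau\in(0,1/(\|A\|+1))$ and restrict to indices $i$ large enough that $\|A_i\|\leq\|A\|+1$. Rewriting the backward Euler propagator as
\[
(I-\Delta t A_i)^{-1}=\frac{1}{1+\alpha_i\Delta t}\Bigl(I-\tfrac{\Delta t}{1+\alpha_i\Delta t}B_i\Bigr)^{-1},
\]
and using $\|B_i\|-\alpha_i\leq\|A_i\|$, the Neumann series
\[
\Bigl(I-\tfrac{\Delta t}{1+\alpha_i\Delta t}B_i\Bigr)^{-1}=\sum_{k=0}^{\infty}\Bigl(\tfrac{\Delta t}{1+\alpha_i\Delta t}\Bigr)^{k}B_i^{k}
\]
converges in operator norm for every $\Delta t\in(0,\hat\tau]$. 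Each partial sum is a nonnegative combination of elements of the convex cone $\Omega_\mathcal{C}$ and hence lies in $\Omega_\mathcal{C}$; by Lemma~\ref{lemma52} so does the norm limit. Multiplication by the positive scalar $(1+\alpha_i\Delta t)^{-1}$ preserves $\Omega_\mathcal{C}$, so $(I-\Delta t A_i)^{-1}\in\Omega_\mathcal{C}$ for every admissible $\Delta t$.

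For every fixed $\Delta t\in(0,\hat\tau]$ the choice $\hat\tau<1/\|A\|$ makes $(I-\Delta t A)$ invertible and $(I-\Delta t A_i)^{-1}\to(I-\Delta t A)^{-1}$; a final application of Lemma~\ref{lemma52} yields $(I-\Delta t A)^{-1}\in\Omega_\mathcal{C}$, i.e.\ $x_k\in\mathcal{C}$ implies $x_{k+1}\in\mathcal{C}$. The case $\Delta t=0$ is trivial. The hard part is making the Neumann radius uniform in $i$: the shifts $\alpha_i$ are not controlled and could in principle blow up, but the bound $\|B_i\|-\alpha_i\leq\|A_i\|$ reduces the effective radius to $\Delta t\|A_i\|$, which stays below $1$ as soon as $\hat\tau<1/(\|A\|+1)$; without such a reduction one would only obtain an $i$-dependent step bound and would not be able to pass to the limit at a single $\Delta t$.
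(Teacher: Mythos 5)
Your proof is correct and follows essentially the same route as the paper: cross-positivity of $A$ via Schneider--Vidyasagar, approximation by $A_i\in\Pi_\mathcal{C}$ from Lemma~\ref{lemma51}, the factorization $(I-\Delta t A_i)^{-1}=(1+\alpha_i\Delta t)^{-1}(I-\tfrac{\Delta t}{1+\alpha_i\Delta t}B_i)^{-1}$ with a Neumann series in $\Omega_\mathcal{C}$, and a final limit using Lemma~\ref{lemma52}. The only difference is your choice of threshold: you fix $\hat\tau<1/(\|A\|+1)$ and control the Neumann radius explicitly via $\|B_i\|-\alpha_i\le\|A_i\|$ (a clean and in fact slightly more careful treatment of convergence), whereas the paper extracts a subsequence of $i$-dependent bounds $\tau_i$ to push $\hat\tau$ arbitrarily close to $\bar\tau=\sup\{\tau\mid I-A\Delta t \text{ nonsingular on }[0,\tau]\}$, which is what feeds Corollary~\ref{thm123456}.
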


\begin{proof}
Since $\mathcal{C}$ is an invariant set for the continuous system,
we have $e^{At}\mathcal{C}\subseteq\mathcal{C}$ for every $t\geq0.$
According to Theorem 3 in \cite{schne}, this is equivalent to that
the coefficient matrix $A$ is cross-positive on $\mathcal{C}$. Then
by Lemma \ref{lemma51}, there exists a sequence of matrices
$\{A_i\}$, where $(A_i+\alpha_i I)\mathcal{C}\subseteq\mathcal{C}$ for
some $\alpha_i \geq0$, such that $\lim_{i\rightarrow\infty}A_i=A.$
For simplicity, we introduce the notation  $B_i=A_i+\alpha_i I$, then
$B_i\mathcal{C}\subseteq\mathcal{C}.$

Then we consider $(I-A\Delta t)^{-1}$, i.e., the coefficient matrix
of the discrete system obtained by using the backward Euler method. Let
$\bar\tau=\sup\{\tau\,|\, I-A\Delta t$  is nonsingular for every $ \Delta t\in [0,\tau]\}
$, then we have $\bar\tau>0.$ Since $\lim_{i\rightarrow\infty}A_i=A$ for every
$0<\epsilon_1<\epsilon_2<\bar\tau,$ there exists an integer $\bar{n}>0$,  such that
for every $i>\bar{n},$ we have $I-A_i\Delta t$ is nonsingular for $\Delta
t\in[0, \tau_i],$ where
$\tau_i\in(\bar\tau-\epsilon_2,\bar\tau-\epsilon_1).$ Since
$\{\tau_i\}_{i>\bar{n}}$ is bounded, it has a convergent subsequence
$\{\tau_{i^*}\}$, i.e.,
$\lim_{i^*\rightarrow\infty}\tau_{i^*}=\hat\tau\in[\bar\tau-\epsilon_2,\bar\tau-\epsilon_1].$
Thus, we have $0<\hat\tau<\bar\tau,$ and $I-A\Delta t$ is nonsingular
for $\Delta t\in[0, \hat\tau]$. For every $i^*$ we have
\begin{equation}\label{eq61}
(I-A_{i^*}\Delta t)^{-1}=((1+\alpha_{i^*}\Delta t)I-B_{i^*}\Delta
t)^{-1}=\tfrac{1}{1+\alpha_{i^*}\Delta
t}(I-B_{i^*}\tfrac{\Delta t}{1+\alpha_{i^*}\Delta
t})^{-1}.
\end{equation}
Since $B_{i^*}\mathcal{C}\subseteq\mathcal{C}$ and $\frac{\Delta
t}{1+\alpha_{i^*}\Delta t}>0$, we have
\begin{equation}\label{eq62}
(I-B_{i^*}\tfrac{\Delta t}{1+\alpha_{i^*}\Delta
t})^{-1}\mathcal{C}=(I+\tfrac{\Delta
t}{1+\alpha_{i^*}\Delta t}B_{i^*}+(\tfrac{\Delta
t}{1+\alpha_{i^*}\Delta
t})^2B_{i^*}^2+...)\mathcal{C}\subseteq\mathcal{C}.
\end{equation}
Since $1+\alpha_{i^*}\Delta t>0,$ by (\ref{eq61}) and (\ref{eq62}),
we have $(I-A_{i^*}\Delta t)^{-1}\mathcal{C}\subseteq\mathcal{C}$
for $\Delta t\in[0, \tau_{i^*}].$

Finally,  since $(I-A\Delta
t)^{-1}=\lim_{{i^*}\rightarrow\infty}(I-A_{i^*}\Delta t)^{-1}$,
according to Lemma \ref{lemma52}, we have $(I-A\Delta
t)^{-1}\mathcal{C}\subseteq\mathcal{C}$ for $\Delta
t\in[0,\hat\tau].$ The proof is complete.
\end{proof}

In fact, according to the proof of Theorem \ref{thm12345}, we can
also give the exact value of a uniform bound for the steplength for a
proper cone.
\begin{corollary}\label{thm123456}
Assume that a proper cone $\mathcal{C}\in\mathbb{R}^n$ is an
invariant set for the continuous system (\ref{dyna1}). Let
$\bar\tau=\sup\{\tau\,|\, I-A\Delta t \text{ is nonsingular for every } \Delta t\in [0,\tau]\}
$. Then for every $x_k\in \mathcal{C}$ and $\Delta t\in[0, \bar\tau)$, we
have $x_{k+1}\in \mathcal{C}$, where $x_{k+1}$ is obtained by the
backward Euler method (\ref{euler2}), i.e., $\mathcal{C}$ is an
invariant set for the discrete system.
\end{corollary}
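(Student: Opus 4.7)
The plan is to extract the corollary as a direct strengthening of Theorem \ref{thm12345}, by exploiting the freedom in the choice of $\epsilon_1, \epsilon_2$ used in its proof. The theorem already produces \emph{some} $\hat\tau \in [\bar\tau-\epsilon_2, \bar\tau-\epsilon_1]$ for which $(I-A\Delta t)^{-1}\mathcal{C}\subseteq \mathcal{C}$ on $[0,\hat\tau]$; the idea is to observe that $\epsilon_1,\epsilon_2$ can be chosen freely as long as $0<\epsilon_1<\epsilon_2<\bar\tau$, so $\hat\tau$ can be pushed arbitrarily close to $\bar\tau$.

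Concretely, I would fix an arbitrary $\Delta t_0 \in [0,\bar\tau)$ and aim to show $x_{k+1}=(I-A\Delta t_0)^{-1}x_k\in\mathcal{C}$ for every $x_k\in\mathcal{C}$. I would set $\epsilon_2 = (\bar\tau - \Delta t_0)/2$ and pick any $\epsilon_1 \in (0,\epsilon_2)$, so that $\Delta t_0 < \bar\tau - \epsilon_2$. Then I would replay the construction in the proof of Theorem \ref{thm12345}: using cross-positivity of $A$ on $\mathcal{C}$ together with Lemma \ref{lemma51}, obtain matrices $A_i = B_i - \alpha_i I$ with $B_i\mathcal{C}\subseteq \mathcal{C}$ and $A_i \to A$; use continuity of the singular locus of $I - M\Delta t$ in $M$ (from $A_i\to A$ and $\bar\tau$ defined via the nonsingularity of $I-A\Delta t$) to secure that, for all large $i$, there is $\tau_i \in (\bar\tau-\epsilon_2,\bar\tau-\epsilon_1)$ on which $I - A_i\Delta t$ is nonsingular; extract a convergent subsequence $\tau_{i^*}\to\hat\tau\in[\bar\tau-\epsilon_2,\bar\tau-\epsilon_1]$; apply the Neumann-series argument (equations (\ref{eq61})--(\ref{eq62})) to conclude $(I-A_{i^*}\Delta t)^{-1}\mathcal{C}\subseteq\mathcal{C}$ on $[0,\tau_{i^*}]$; and pass to the limit via the closedness of $\Omega_{\mathcal{C}}$ from Lemma \ref{lemma52} to get $(I-A\Delta t)^{-1}\mathcal{C}\subseteq\mathcal{C}$ on $[0,\hat\tau]$.

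By the choice of $\epsilon_2$, we have $\Delta t_0 < \bar\tau - \epsilon_2 \le \hat\tau$, so $\Delta t_0$ lies in the allowed interval and the desired inclusion $x_{k+1}\in\mathcal{C}$ follows. Since $\Delta t_0 \in [0,\bar\tau)$ was arbitrary, this gives invariance for every steplength in $[0,\bar\tau)$, which is exactly the corollary.

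The main obstacle, such as it is, is really only bookkeeping: one must make sure that the intermediate threshold $\hat\tau$ produced by the compactness/subsequence argument can be chosen to exceed any prescribed $\Delta t_0 < \bar\tau$, rather than being some \emph{a priori} fixed value. This is handled by tying the windows $(\bar\tau - \epsilon_2, \bar\tau - \epsilon_1)$ to $\Delta t_0$, which is the only real change from the proof of Theorem \ref{thm12345}. No new structural input about the cone, the Lyapunov-type condition, or the Neumann series is needed.
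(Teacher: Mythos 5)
Your proposal is correct and is essentially identical to the paper's own proof: the paper likewise fixes an arbitrary $\Delta t\in[0,\bar\tau)$, chooses $0<\epsilon_1<\epsilon_2<\bar\tau-\Delta t$, and reruns the argument of Theorem \ref{thm12345} to obtain $\hat\tau\in[\bar\tau-\epsilon_2,\bar\tau-\epsilon_1]$ with $\Delta t<\bar\tau-\epsilon_2\leq\hat\tau$. Your explicit choice $\epsilon_2=(\bar\tau-\Delta t_0)/2$ is just one instance of that same selection.
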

\begin{proof}
For every $\Delta t\in[0,\bar\tau),$ we choose
$0<\epsilon_1<\epsilon_2<\bar\tau-\Delta t.$ Then, by an argument similar
to the proof of Theorem \ref{thm12345}, we have that
$\hat\tau\in[\bar\tau-\epsilon_2, \bar\tau-\epsilon_1]$ is a uniform bound
of the steplength. Note that $\Delta t<\bar\tau-\epsilon_2\leq\hat\tau,$
and we can choose $\epsilon_2>0$ sufficiently small, then the corollary is immediate.
\end{proof}

Let us take an example to illustrate Corollary \ref{thm123456}.
\begin{example}
Consider the cone $\mathcal{C}=\{(\xi,\eta)\;|\;\xi^2\leq
\eta^2,\eta\geq0\},$ and the  system
$\dot{\xi}=3\xi-\eta, \dot{\eta}=-\xi+3\eta.$
\end{example}

The solution of the system is $\xi(t)=\frac{1}{2}(\alpha
e^{2t}-\beta e^{4t}), \eta(t)=\frac{1}{2}(\alpha e^{2t}+\beta
e^{4t})$, where $\alpha, \beta$ depend on the initial condition.
Clearly, $\mathcal{C}$ is an invariant set for the system. It is
easy to compute $\tau=\sup\,\{\,\Delta t\,|\, I-A\Delta t \text{ is
nonsingular}\}=\frac{1}{4}.$ When the backward Euler method is
applied, we have $\xi_{k+1}=\gamma({(1-3\Delta t)\xi_k-\Delta
t\eta_k}), \eta_{k+1}=\gamma(-\Delta t\xi+(1-3\Delta t)\eta_k)$,
where $\gamma={((1-3\Delta t)^2+(\Delta t)^2)^{-1}}$. To ensure that
$\xi_{k+1}^2\leq \eta_{k+1}^2$, we let $(1-3\Delta t)^2-(\Delta
t)^2\geq0,$ which yields that $\Delta t\leq \frac{1}{4}$. Note that the other
solution that $\Delta t \geq
\frac{1}{2}$ is not applicable.

Since a Lorenz cone is a proper cone, the following corollary is
immediate.

\begin{corollary}\label{thm1234567}
Assume that a Lorenz cone $\mathcal{C_L}$, given as in  
(\ref{ellicone}), is an invariant set for the continuous system
(\ref{dyna1}).  Then there exists a $\hat\tau>0$ such that for every
$x_k\in \mathcal{C_L}$ and $\Delta t\in [0, \hat\tau]$ we have
$x_{k+1}\in \mathcal{C_L}$, where $x_{k+1}$ is obtained by the
backward Euler method (\ref{euler2}), i.e., $\mathcal{C_L}$ is an
invariant set for the discrete system. Moreover, $\hat\tau\in[0,\bar\tau)$, where $\bar\tau$ is given as in Corollary \ref{thm123456}.
\end{corollary}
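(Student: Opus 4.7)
The plan is to derive this corollary as an immediate specialization of Theorem \ref{thm12345} and Corollary \ref{thm123456} to the case of the Lorenz cone. The only nontrivial observation is that a Lorenz cone $\mathcal{C_L}$ given by (\ref{ellicone}) is indeed a proper cone in the sense defined just before Theorem \ref{thm12345}, so that those results apply.

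First I would verify that $\mathcal{C_L}$ satisfies the three defining properties of a proper cone: nonempty, closed, and pointed (in addition to being convex). Nonemptiness is clear since $0 \in \mathcal{C_L}$, and in fact the cone has nonempty interior because the inertia condition ${\rm inertia}\{Q\} = \{n-1,0,1\}$ guarantees that the quadratic form $x^T Q x$ is indefinite with the correct signature. Closedness follows because $\mathcal{C_L}$ is defined by two non-strict quadratic/linear inequalities, each of which cuts out a closed set. Convexity is a standard fact about the Lorenz cone, and can also be read off from the representation as the image of the standard second-order cone under a linear change of variables dictated by $Q$. Pointedness (that $\mathcal{C_L} \cap (-\mathcal{C_L}) = \{0\}$) follows from the half-space constraint $x^T Q u_n \le 0$: if $x \in \mathcal{C_L} \cap (-\mathcal{C_L})$, then $x^T Q u_n \le 0$ and $-x^T Q u_n \le 0$, hence $x^T Q u_n = 0$, which combined with $x^T Q x \le 0$ and the inertia of $Q$ forces $x = 0$.

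Once $\mathcal{C_L}$ is identified as a proper cone, I would invoke Theorem \ref{thm12345} to conclude the existence of some $\hat\tau > 0$ such that $x_{k+1} \in \mathcal{C_L}$ for all $x_k \in \mathcal{C_L}$ and all $\Delta t \in [0,\hat\tau]$, which is exactly the statement of invariance preservation. For the quantitative ``moreover'' part, I would apply Corollary \ref{thm123456}, which with the same definition $\bar\tau = \sup\{\tau \mid I - A\Delta t \text{ is nonsingular for every } \Delta t \in [0,\tau]\}$ gives invariance on the half-open interval $[0,\bar\tau)$, and hence any $\hat\tau \in [0,\bar\tau)$ is admissible.

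There is no real obstacle here beyond the verification of propertiness; once that is in place, the corollary is a direct inheritance from the proper-cone case. A minor cosmetic point is that the proof can be condensed to a single paragraph stating that $\mathcal{C_L}$ is proper and invoking the two preceding results, since the paper already carries out all the substantive work in the general proper cone setting.
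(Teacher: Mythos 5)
Your proposal is correct and follows exactly the paper's route: the paper states only that ``since a Lorenz cone is a proper cone, the corollary is immediate'' from Theorem \ref{thm12345} and Corollary \ref{thm123456}, and your argument fills in precisely the propertiness verification that the paper leaves implicit. No gaps.
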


\subsection{General Results for Uniform Steplength Threshold}

The property that the forward Euler method has a uniform invariance preserving steplength threshold for  plays
a significant role in the proof of Theorem \ref{thema311}, thus  we now generalize the conclusion to closed and convex sets.
By a similar proof of Theorem \ref{thema311}, the following theorem is immediate.

\begin{theorem}\label{thm21}
Let  $\mathcal{S}$ be a closed and convex set. Assume that  $\mathcal{S}$ is
an invariant set for the continuous system (\ref{dyna1}), and let $\bar\tau=\sup\{\tau\,|\, I-A\Delta t $ is nonsingular for every $ \Delta t\in [0,\tau]\}
$. Assume that there exists a $\tilde\tau>0$, such that
for every $x_k\in \mathcal{S}$ and $\Delta t\in [0,\tilde\tau]$, we have $x_{k}+\Delta t Ax_k\in \mathcal{S}$.
Then  for every $x_k\in
\mathcal{S}$  and $\Delta t\in [0, \bar\tau)$, we have $x_{k+1}\in
\mathcal{S}$, where $x_{k+1}$ is obtained by the
backward Euler method (\ref{euler2}), i.e., $\mathcal{S}$ is an
invariant set for the discrete system.
\end{theorem}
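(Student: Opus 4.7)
The plan is to mirror the structure of the proof of Theorem~\ref{thema311}, substituting the uniform forward Euler invariance hypothesis $\tilde\tau$ for the polyhedral threshold that was previously supplied by Theorem~\ref{thema1}. All the algebraic manipulations—most notably the identity~(\ref{eq633}) expressing $x_{k+1}$ as a strict convex combination of $x_k$ and $\bar x := x_{k+1}+\tau A x_{k+1}$—depend only on the formula $x_{k+1}=(I-A\Delta t)^{-1}x_k$ and hence carry over verbatim. The definition of $\bar\tau$ guarantees that $x_{k+1}$ is well defined for every $\Delta t\in[0,\bar\tau)$, and the closed convex set $\mathcal{S}$ is partitioned by $\text{ri}(\mathcal{S})$ and $\text{rb}(\mathcal{S})$, so the same case split used before applies.

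For the first case, $x_k\in\text{ri}(\mathcal{S})$, I would fix $\Delta t\in[0,\bar\tau)$ and argue by contradiction: assume $x_{k+1}\in\text{rb}(\mathcal{S})\subseteq\mathcal{S}$ (using closedness). Pick any $\tau\in(0,\tilde\tau]$; the forward Euler hypothesis applied at $x_{k+1}\in\mathcal{S}$ gives $\bar x=x_{k+1}+\tau A x_{k+1}\in\mathcal{S}$. Identity~(\ref{eq633}) then writes $x_{k+1}$ as a convex combination with strictly positive weights of $x_k\in\text{ri}(\mathcal{S})$ and $\bar x\in\mathcal{S}$; the standard convex-analysis fact that mixing a relative interior point with any point of a convex set via strictly positive coefficients lands in $\text{ri}(\mathcal{S})$ forces $x_{k+1}\in\text{ri}(\mathcal{S})$, a contradiction.

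For the second case, $x_k\in\text{rb}(\mathcal{S})$, I would approximate $x_k$ from the relative interior. Fix any $y\in\text{ri}(\mathcal{S})$ and define $\bar x_k^{\epsilon}=(1-\epsilon)x_k+\epsilon y$; by the same convex-combination lemma, $\bar x_k^\epsilon\in\text{ri}(\mathcal{S})$ for every $\epsilon\in(0,1]$. Case~1 applies to each $\bar x_k^\epsilon$ and yields $\bar x_{k+1}^\epsilon=(I-A\Delta t)^{-1}\bar x_k^\epsilon\in\mathcal{S}$. Letting $\epsilon\to 0^+$, continuity of $(I-A\Delta t)^{-1}$ on $[0,\bar\tau)$ gives $\bar x_{k+1}^\epsilon\to x_{k+1}$, and closedness of $\mathcal{S}$ yields $x_{k+1}\in\mathcal{S}$.

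The main obstacle is largely bookkeeping rather than a new idea: the whole argument is an abstraction of the polyhedral proof, made possible by replacing the role of Theorem~\ref{thema1} with the explicit hypothesis on $\tilde\tau$. The only conceptual point requiring some care is the invocation of the relative-interior mixing lemma in both cases, which is precisely where convexity of $\mathcal{S}$ is essential—this is what would fail for a closed but non-convex invariant set, and is the reason the theorem cannot be relaxed beyond the convex setting without additional hypotheses.
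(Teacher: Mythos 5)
Your proposal is correct and matches the paper's intended argument: the paper proves Theorem~\ref{thm21} precisely by declaring it ``immediate by a similar proof of Theorem~\ref{thema311}'', and your write-up is exactly that adaptation---the convex-combination identity~(\ref{eq633}) with the hypothesized forward Euler threshold $\tilde\tau$ playing the role of Theorem~\ref{thema1}, followed by the relative-interior/relative-boundary case split and the limiting argument using closedness. (You even silently repair two slips in the paper's version, writing the perturbation as a genuine convex combination $(1-\epsilon)x_k+\epsilon y$ and using $(I-A\Delta t)^{-1}$ rather than $(I-A\Delta t)$ in Case~2.)
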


The compactness of an ellipsoid plays an important role in the proof
of Theorem \ref{thema3}. Now we generalize Theorem \ref{thema3} to
compact sets.
\begin{theorem}\label{themacor3}
Let a set $\mathcal{S}$, and a discretization method  $x_{k+1}=D(\Delta t)x_k$ be given. Assume that the following conditions
hold:
\begin{enumerate}
  \item The set $\mathcal{S}$ is a compact set.
  \item For every $x_k\in \mathcal{S}$, there exists a $\tau(x_k)>0$,
  such that $x_{k+1}\in \rm{int}(\mathcal{S})$ for every $\Delta t\in(0,\tau(x_k)].$
  \item There exists a $\tilde{\tau}>0,$ such that  $\|D(\Delta t)\|$
  is uniformly bounded for every $x\in \mathcal{S}$ and  $\Delta t\in [0,\tilde{\tau}]$.
\end{enumerate}
Then there exists a $\hat\tau>0$, such that for every $x_k\in
\mathcal{S}$ and $\Delta t\in[0,\hat\tau]$, we have $x_{k+1}\in
\mathcal{S}$, i.e., $\mathcal{S}$ is an invariant set for the
discrete system.
\end{theorem}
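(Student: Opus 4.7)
The plan is to generalize, essentially verbatim, the covering argument used in the proof of Theorem~\ref{thema3}. Hypotheses (1)--(3) of the theorem are precisely the abstract versions of the three ingredients that drove that proof: compactness of the ellipsoid, the interior-preservation from Lemma~\ref{lemma21}, and the uniform operator bound on $\|(I-A\Delta t)^{-1}\|$. The argument should transfer to the abstract setting almost step by step.

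First, I would use condition~(3) to fix a constant $M>0$ with $\|D(\Delta t)\|\le M$ for every $\Delta t\in[0,\tilde\tau]$. For each $x\in\mathcal{S}$ I would set $\hat\tau(x):=\min\{\tau(x),\tilde\tau\}>0$, where $\tau(x)$ is supplied by condition~(2); then $y^{*}(x):=D(\hat\tau(x))x\in\text{int}(\mathcal{S})$, and I would pick $r(x)>0$ with the open ball $\delta(y^{*}(x),r(x))\subset\text{int}(\mathcal{S})$. For every $\tilde x\in\delta(x,r(x)/M)$ the operator bound gives
\[
\|D(\hat\tau(x))\tilde x - y^{*}(x)\|\le M\|\tilde x - x\| < r(x),
\]
so $D(\hat\tau(x))\tilde x\in\text{int}(\mathcal{S})\subseteq\mathcal{S}$. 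The family $\{\delta(x,r(x)/M):x\in\mathcal{S}\}$ is then an open cover of the compact set $\mathcal{S}$, from which I would extract a finite subcover $\delta(x^{1},r(x^{1})/M),\ldots,\delta(x^{m},r(x^{m})/M)$ and set $\hat\tau:=\min_{i}\hat\tau(x^{i})>0$. Any $y\in\mathcal{S}$ lies in some $\delta(x^{i},r(x^{i})/M)$, and for every $\Delta t\in[0,\hat\tau]$ I would reapply the local estimate with $\Delta t$ in place of $\hat\tau(x^{i})$ to conclude $D(\Delta t)y\in\mathcal{S}$.

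The main obstacle is this final sentence: the estimate was phrased for the specific steplength $\hat\tau(x^{i})$, and I must check that a single radius $r(x^{i})/M$ works uniformly for every $\Delta t\in[0,\hat\tau(x^{i})]$. As in Theorem~\ref{thema3}, this is handled by combining the operator bound from condition~(3), which is valid uniformly on $[0,\tilde\tau]$, with the interior inclusion from condition~(2), which holds throughout $(0,\hat\tau(x^{i})]$; the strengthening ``$x_{k+1}\in\text{int}(\mathcal{S})$'' rather than merely ``$x_{k+1}\in\mathcal{S}$'' in condition~(2) is exactly what makes this uniformization over $\Delta t$ possible.
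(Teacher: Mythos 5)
Your proposal reproduces the paper's proof of Theorem \ref{themacor3} essentially verbatim: the same $\hat\tau(x)=\min\{\tau(x),\tilde\tau\}$, the same ball $\delta(x,r(x)/M)$ mapped by $D(\hat\tau(x))$ into $\delta(y^{*}(x),r(x))\subset\mathrm{int}(\mathcal{S})$, the same finite subcover and minimum. The one difference is that you explicitly flag, in your last paragraph, the step that the paper passes over in silence --- and that step is a genuine gap, which your closing sentence asserts away rather than closes.

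Concretely: the estimate $\|D(\hat\tau(x^{i}))\tilde x-y^{*}(x^{i})\|\le M\|\tilde x-x^{i}\|<r(x^{i})$ certifies $D(\Delta t)\tilde x\in\mathcal{S}$ only for the single steplength $\Delta t=\hat\tau(x^{i})$. For an intermediate $\Delta t\in(0,\hat\tau(x^{i}))$ the relevant center is $D(\Delta t)x^{i}$, which by condition (2) lies in $\mathrm{int}(\mathcal{S})$, but the radius of the largest ball around it contained in $\mathrm{int}(\mathcal{S})$ depends on $\Delta t$ and need not be bounded below by $r(x^{i})$: if, as for any consistent discretization, $D(\Delta t)x^{i}\to x^{i}$ as $\Delta t\to0^{+}$ and $x^{i}\in\partial\mathcal{S}$, then $\mathrm{dist}(D(\Delta t)x^{i},\partial\mathcal{S})\to0$. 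So the operator bound from condition (3) combined with the pointwise interior inclusion from condition (2) does not produce a single radius valid for all $\Delta t\in(0,\hat\tau(x^{i})]$, and the sentence ``this is handled by combining the operator bound with the interior inclusion'' is an assertion, not an argument; what is needed is some uniformity in $\Delta t$, e.g.\ a quantitative lower bound on $\mathrm{dist}(D(\Delta t)x,\partial\mathcal{S})$ for small $\Delta t$ that dominates $M\|\tilde x-x\|$, or a separate treatment of the regimes $\Delta t\in(0,\epsilon]$ and $\Delta t\in[\epsilon,\hat\tau]$. Be aware that the paper's own proofs of Theorem \ref{themacor3} and of Theorem \ref{thema3} contain exactly the same lacuna (they too verify the inclusion only at the endpoint steplength), so you have faithfully reconstructed the published argument and correctly located its weak point; but as a standalone proof the final uniformization over $\Delta t$ is missing.
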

\begin{proof}
\begin{figure}[h]
    \centering
    \includegraphics[width=0.4\textwidth]{./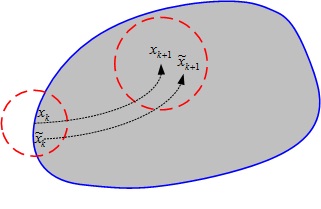}
    \caption{The idea of the proof of Theorem \ref{themacor3}.}
    \label{fig2}
\end{figure}
Note that every positive $\tau<\tau(x_k)$ is also a bound for $\Delta
t$ at $x_k$. Then let us define
$\hat{\tau}(x_k)=\min\{\tau(x_k),\tilde{\tau} \}$, according to
Condition 2, we have  $x_{k+1}=D(\hat{\tau}(x_k))x_{k}\in
\rm{int}(\mathcal{S})$. Thus we can choose an $r(x_{k+1})>0$, such
that the open ball
$\delta(x_{k+1},r(x_{k+1}))\subset\rm{int}(\mathcal{S}).$
According to Condition 3, there exists $0<K<\infty,$ such that
$\|D(\Delta t)\|\leq K$ for all $x\in \mathcal{S}$ and  $ \Delta t\in [0, \tilde{\tau}]$.

It is easy to verify that by the discretization method the open ball
$\delta(x_k, \frac{1}{K}r(x_{k+1}))$ is mapping into
$\delta(x_{k+1},r(x_{k+1}))$, see Figure \ref{fig2}. This is because for every
$\tilde{x}_k\in\delta(x_k,\frac{1}{K}r(x_{k+1}))$, the
discretization method applied to $\tilde{x}_k$ with steplength
$\hat\tau(x_k)$ yields
$\tilde{x}_{k+1}=D(\hat{\tau}(x_k))\tilde{x}_k$. Then we have
\begin{equation}\label{eqabond1}
\|\tilde{x}_{k+1}-x_{k+1}\|\leq
\|D(\hat{\tau}(x_k))\|\|\tilde{x}_k-x_k\|\leq
K\|\tilde{x}_k-{x}_k\|\leq r(x_{k+1}),
\end{equation}
i.e., $\tilde{x}_{k+1}\in \delta(x_{k+1},r(x_{k+1}))\subset
\rm{int}(\mathcal{S}). $ Therefore, we have that $\hat\tau(x_k)$
is a uniform bound for $\Delta t$ at every point in $\delta(x_k,
\frac{1}{K}r(x_{k+1})).$

Obviously,
$\cup_{x_k\in\mathcal{S}}\delta(x_k,\frac{1}{K}r(x_{k+1}))$ is an
open cover of  $\mathcal{S}$. Since $\mathcal{S}$ is a compact set, there
exists a finite subcover
$\cup_{k=1}^m\delta(x_k,\frac{1}{K}r(x_{k+1}))$ of $\mathcal{S}$. A
uniform bound for $\Delta t$ can be the smallest $\hat\tau(x_k)$ of
the finite number of  open balls
$\delta(x_k,\frac{1}{K}r(x_{k+1}))$, thus, we have that
$\hat\tau=\min_{k=1,...,m}\{\hat\tau(x_k)\}$ is a uniform bound for
$\Delta t$ for the discretization method at every point in
$\mathcal{S}.$ The proof is complete.
\end{proof}

\begin{remark}
  Assumption 2 in Theorem \ref{themacor3} implies that
  $0\notin\partial(S).$ Otherwise, for $x_k=0,$ we would have $x_{k+1}=D(\Delta t)0=0\in
  \partial(S)$ for every $\Delta t.$
\end{remark}

We can generalize Theorem \ref{themacor3} to nonlinear systems by introducing Lipschitz condition to replace Condition 3 in Theorem \ref{themacor3}. 

\begin{theorem}\label{themacor313}
Let a set $\mathcal{S}$, and a discretization method  $x_{k+1}=D(\Delta t,x_k)$ be given. Assume that the following conditions
hold:
\begin{enumerate}
  \item The set $\mathcal{S}$ is a compact set.
  \item For every $x_k\in \mathcal{S}$, there exists a $\tau(x_k)>0$,
  such that $x_{k+1}\in \rm{int}(\mathcal{S})$ for every $\Delta t\in(0,\tau(x_k)].$
  \item The Lipschitz condition holds for $D(\Delta t, x)$ with respect to $x$, i.e., there exists an $L>0,$ such that 
  \begin{equation}\label{lipcond}
  \|D(\Delta t, \tilde{x})-D(\Delta t, x)\|\leq L\|\tilde{x}-x\|, \text{ for } x,\tilde{x}\in \mathcal{S}.
  \end{equation}
\end{enumerate}
Then there exists a $\hat\tau>0$, such that for every $x_k\in
\mathcal{S}$ and $\Delta t\in[0,\hat\tau]$, we have $x_{k+1}\in
\mathcal{S}$, i.e., $\mathcal{S}$ is an invariant set for the
discrete system.
\end{theorem}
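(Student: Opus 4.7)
The plan is to mirror the proof of Theorem \ref{themacor3} essentially verbatim, substituting the Lipschitz constant $L$ from Condition 3 for the uniform operator bound $K$ that was used in the linear setting. In the linear case, drift between nearby starting points was controlled by $\|D(\Delta t)(\tilde{x}_k - x_k)\| \leq K\|\tilde{x}_k - x_k\|$; in the nonlinear case, the hypothesis (\ref{lipcond}) provides the analogous estimate $\|D(\Delta t, \tilde{x}_k) - D(\Delta t, x_k)\| \leq L\|\tilde{x}_k - x_k\|$, which is exactly what the compactness argument needs to localize the map.

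Concretely, I would proceed as follows. For each $x_k \in \mathcal{S}$, use Condition 2 to pick $\tau(x_k) > 0$ with $x_{k+1} := D(\tau(x_k), x_k) \in \text{int}(\mathcal{S})$, and choose $r(x_{k+1}) > 0$ so that $\delta(x_{k+1}, r(x_{k+1})) \subset \text{int}(\mathcal{S})$. For any $\tilde{x}_k \in \delta(x_k, r(x_{k+1})/L) \cap \mathcal{S}$, inequality (\ref{lipcond}) gives
\begin{equation*}
\|D(\tau(x_k), \tilde{x}_k) - x_{k+1}\| \leq L\|\tilde{x}_k - x_k\| \leq r(x_{k+1}),
\end{equation*}
so $D(\tau(x_k), \tilde{x}_k) \in \delta(x_{k+1}, r(x_{k+1})) \subset \mathcal{S}$. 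Then $\{\delta(x_k, r(x_{k+1})/L)\}_{x_k \in \mathcal{S}}$ is an open cover of the compact set $\mathcal{S}$ (Condition 1); extract a finite subcover indexed by $x_{k_1}, \ldots, x_{k_m}$ and set $\hat{\tau} := \min_{j=1,\ldots,m} \tau(x_{k_j}) > 0$.

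The step requiring the most care is verifying that $\hat{\tau}$ works not only for the specific values $\tau(x_{k_j})$ but for every $\Delta t \in [0, \hat{\tau}]$. Exactly as in the proof of Theorem \ref{themacor3}, this follows from the observation that Condition 2 yields $D(\Delta t, x_{k_j}) \in \text{int}(\mathcal{S})$ for \emph{all} $\Delta t \in (0, \tau(x_{k_j})]$, not just one value, so the same Lipschitz-and-ball argument can be reapplied with $\Delta t$ replacing $\tau(x_{k_j})$, using a possibly smaller interior ball radius around $D(\Delta t, x_{k_j})$. The only genuinely new ingredient compared with Theorem \ref{themacor3} is the replacement of the operator-norm bound by the nonlinear Lipschitz estimate; the remainder of the argument is a line-by-line transcription and does not require the system to be linear in $x$.
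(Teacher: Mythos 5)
Your proposal is correct and follows essentially the same route as the paper: the paper's own proof of this theorem simply states that the argument of Theorem \ref{themacor3} carries over verbatim once the operator-norm estimate (\ref{eqabond1}) is replaced by the Lipschitz estimate (\ref{eqabond2}), which is exactly the substitution you make (balls of radius $r(x_{k+1})/L$ in place of $r(x_{k+1})/K$, followed by the same compactness and finite-subcover argument). Your additional remark about verifying the bound for all $\Delta t\in[0,\hat\tau]$ rather than only the endpoint values is, if anything, more careful than the paper's treatment.
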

\begin{proof}
The proof is almost the same as the one presented in Theorem \ref{themacor3}. The only difference is that equation (\ref{eqabond1}) is replaced by
\begin{equation}\label{eqabond2}
\|\tilde{x}_{k+1}-x_{k+1}\|=
\|D(\Delta t, \tilde{x}_k)-D(\Delta t, x_k)\|\leq
L\|\tilde{x}_k-{x}_k\|\leq r(x_{k+1}),
\end{equation}
which is due to (\ref{lipcond}). 
\end{proof}

The assumption in Theorem \ref{thm123} that no eigenvector
of the coefficient matrix is on the boundary of $\mathcal{C_L}$
excludes the case that $x_{k+1}\in\partial(\mathcal{C_L}).$ We now
generalize Theorem \ref{thm123} to proper cones.

\begin{theorem}\label{themacor311}
Let a set $\mathcal{C}$, and a discretization method $x_{k+1}=D(\Delta
t)x_k$ be given. Assume that the following conditions hold:
\begin{enumerate}
\item The set $\mathcal{C}$ is a proper cone.
\item For every $0\neq x_k\in \mathcal{C}$, there exists
a $\tau(x_k)>0$, such that $x_{k+1}\in \rm{int}(\mathcal{C})$ for
every $\Delta t\in(0,\tau(x_k)].$
  \item There exists a $\tilde{\tau}>0,$ such that  $\|D(\Delta t)\|$
  is uniformly bounded for every $x\in \mathcal{S}$ and  $\Delta t\in [0,\tilde{\tau}]$.
\end{enumerate}
Then there exists a $\hat\tau>0$,
such that for every $x_k\in \mathcal{C}$ and $\Delta
t\in[0,\hat\tau]$, we have $x_{k+1}\in \mathcal{C}$, i.e.,
$\mathcal{C}$ is an invariant set for the discrete system.
\end{theorem}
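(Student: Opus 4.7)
The plan is to mimic the two-step structure used in the proof of Theorem \ref{thm123}: first secure a uniform steplength bound on a compact base of the cone using the covering argument behind Theorem \ref{themacor3}, then extend it to all of $\mathcal{C}$ by exploiting the homogeneity of the linear map $D(\Delta t)$. I would first dispose of the trivial case $x_k=0$, where $x_{k+1}=D(\Delta t)\cdot 0=0\in\mathcal{C}$. Next, since $\mathcal{C}$ is proper, it admits a compact base, constructed exactly as in the footnote in the proof of Theorem \ref{thm123}: take a hyperplane through the origin meeting $\mathcal{C}$ only at $0$, translate it to pass through an interior point of $\mathcal{C}$, and set $\mathcal{C}^+$ to be its intersection with $\mathcal{C}$. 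Then $\mathcal{C}^+$ is compact and does not contain $0$.

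The heart of the proof is producing a uniform bound $\hat\tau(\mathcal{C}^+)>0$ such that $D(\Delta t)\mathcal{C}^+\subseteq\mathcal{C}$ for every $\Delta t\in[0,\hat\tau(\mathcal{C}^+)]$. For this I would repeat the open-cover argument of Theorem \ref{themacor3} with $\mathcal{C}^+$ playing the role of the compact domain. For each $x_k\in\mathcal{C}^+$, Condition 2 produces $\tau(x_k)>0$ with $x_{k+1}\in\mathrm{int}(\mathcal{C})$ for $\Delta t\in(0,\tau(x_k)]$; letting $\hat\tau(x_k)=\min\{\tau(x_k),\tilde\tau\}$ and $K=\sup_{\Delta t\in[0,\tilde\tau]}\|D(\Delta t)\|$, there is $r(x_{k+1})>0$ with $\delta(x_{k+1},r(x_{k+1}))\subset\mathrm{int}(\mathcal{C})$, so that every point of $\delta(x_k,r(x_{k+1})/K)$ is mapped into $\mathrm{int}(\mathcal{C})$ at steplength $\hat\tau(x_k)$. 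These open balls cover the compact set $\mathcal{C}^+$, a finite subcover exists, and $\hat\tau(\mathcal{C}^+)$ is defined as the minimum of the finitely many $\hat\tau(x_k)$ so selected.

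Finally, I would extend the bound from the base to all of $\mathcal{C}$ by homogeneity. For any $0\neq x_k\in\mathcal{C}$, because $\mathcal{C}^+$ is a base, there exists $\gamma>0$ with $\gamma x_k\in\mathcal{C}^+$; since $D(\Delta t)$ is linear, $x_{k+1}=D(\Delta t)x_k=\gamma^{-1}D(\Delta t)(\gamma x_k)\in\gamma^{-1}\mathcal{C}=\mathcal{C}$ for every $\Delta t\in[0,\hat\tau(\mathcal{C}^+)]$. Thus $\hat\tau:=\hat\tau(\mathcal{C}^+)$ is the desired uniform steplength threshold.

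The main obstacle, as I see it, is the temptation to simply invoke Theorem \ref{themacor3} with $\mathcal{S}=\mathcal{C}^+$. This does not work: Condition 2 here only guarantees $x_{k+1}\in\mathrm{int}(\mathcal{C})$, not $\mathrm{int}(\mathcal{C}^+)$, and $\mathcal{C}^+$ itself is typically not invariant under $D(\Delta t)$. One must therefore revisit the covering argument with the domain taken to be $\mathcal{C}^+$ but the target open set taken to be $\mathrm{int}(\mathcal{C})$. Fortunately this adjustment is cosmetic, since the argument only needs compactness of the domain together with openness of whatever set $x_{k+1}$ is known to lie in.
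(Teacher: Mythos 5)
Your proposal is correct and follows essentially the same two-step route as the paper's own proof: establish a uniform threshold on a compact base $\mathcal{C}^+$ by rerunning the covering argument of Theorem \ref{themacor3} with target set $\mathrm{int}(\mathcal{C})$, then extend to all of $\mathcal{C}$ via the linearity of $D(\Delta t)$ and the cone property. Your explicit remark that Theorem \ref{themacor3} cannot be invoked verbatim with $\mathcal{S}=\mathcal{C}^+$ (since the interior landed in is that of $\mathcal{C}$, not of the base) is a correct and worthwhile clarification of what the paper leaves implicit.
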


\begin{proof}
If $x_k=0$ then for every $\Delta t\geq0$ we have $x_{k+1}=0\in
\mathcal{C}.$ We now consider the case when $x_k\neq0$. Our
proof has two steps.

\begin{figure}[h]
    \centering
    \includegraphics[width=0.3\textwidth]{./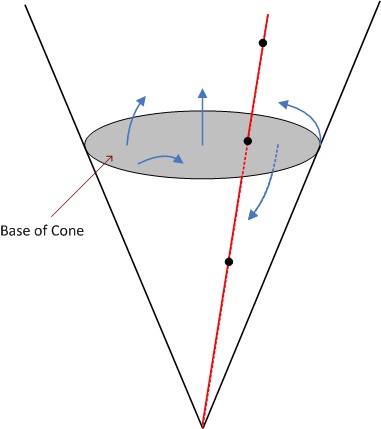}
    \caption{The idea of the proof of Theorem \ref{themacor311}.}
    \label{fig3}
\end{figure}

\emph{The first step} of the proof is considering a uniform bound for
$\Delta t$ on a base (see \cite{barvinok} page 66) of $\mathcal{C}.$
Since $\mathcal{C}$ is a proper cone, we can take a
hyperplane $\mathcal{H}$ to intersect it with $\mathcal{C}$ to
generate a base of $\mathcal{C}$ denoted by
$\mathcal{C^+}=\mathcal{H}\cap\mathcal{C}$. For every  $x_k\in
\mathcal{C^+},$ there exists a $\tau(x_k)>0$, such that $x_{k+1}\in
\rm{int}(\mathcal{C})$ for every $0<\Delta t\leq\tau(x_k).$ Note
that $\mathcal{H}$ and $\mathcal{C}$ are closed sets, thus
$\mathcal{C^+}$ is also a closed set. Also, assume that
$\mathcal{C^+}$ is unbounded, then $\mathcal{C}\cap(-\mathcal{C})$
contains a half line, which contradicts that $\mathcal{C}$ is a pointed
proper cone. Therefore, $\mathcal{C^+}$ is a compact set. Then,
according to a similar argument as in the proof of Theorem
\ref{themacor3}, we have a uniform bound for $\Delta t$, denoted by
$\hat\tau(\mathcal{C^+}),$ on $\mathcal{C^+},$ such that for every
$x_k\in \mathcal{C^+},$ we have $x_{k+1}\in \mathcal{C}$ for every
$\Delta t\in [0,\hat\tau(\mathcal{C^+})]$, see Figure \ref{fig3}.

\emph{The second step} of the proof is extending the uniform  bound
of the steplength $\Delta t$ from $\mathcal{C^+}$ to $\mathcal{C}.$
Let $0\neq x_k\in \mathcal{C}.$ Then, because $\mathcal{C^+}$ is a
base of $\mathcal{C}$, there exists a scalar $\gamma>0$ such that
$\gamma x_k= \tilde{x}_k\in \mathcal{C^+}.$ Then we have 
\begin{equation}\label{condcone}
x_{k+1}=D(\Delta t)x_k=D(\Delta t){\gamma^{-1}}
\tilde{x}_k={\gamma^{-1}} \tilde{x}_{k+1}. 
\end{equation} 
Since
$\tilde{x}_{k+1}\in \mathcal{C}$ for every $\Delta t\in
[0,\tau(\mathcal{C^+})],$  we have $x_{k+1}\in \mathcal{C}$ for
every $\Delta t\in [0,\tau(\mathcal{C^+})].$

Therefore, $\tau(\mathcal{C^+})$  is a uniform  bound for the
steplength $\Delta t$ for the discretization method at every point on
$\mathcal{C}$.
 The proof is complete.
\end{proof}

We can generalize Theorem \ref{themacor311} to nonlinear systems by adding a homogenous condition.  
\begin{theorem}\label{themacor312}
Let a set $\mathcal{C}$, and a discretization method $x_{k+1}=D(\Delta
t,x_k)$ be given. Assume that the following conditions hold:
\begin{enumerate}
\item The set $\mathcal{C}$ is a proper cone.
\item For every $0\neq x_k\in \mathcal{C}$, there exists
a $\tau(x_k)>0$, such that $x_{k+1}\in \rm{int}(\mathcal{C})$ for
every $\Delta t\in(0,\tau(x_k)].$
 \item The Lipschitz condition holds for $D(\Delta t, x)$ with respect to $x$, i.e., there exists an $L>0,$ such that 
  \begin{equation}\label{dlipcond}
  \|D(\Delta t, \tilde{x})-D(\Delta t, x)\|\leq L\|\tilde{x}-x\|, \text{ for } x,\tilde{x}\in \mathcal{S}.
  \end{equation}
\item The function $D(\Delta t, x)$ is homogeneous of degree $p\geq1$ with respect to $x$, i.e., 
\begin{equation}\label{homcond}
D(\Delta t, \alpha x)=\alpha^pD(\Delta t, x), \text{ for } \alpha\in \mathbb{R},\, x\in \mathcal{C}.
\end{equation}
\end{enumerate}
Then there exists a $\hat\tau>0$,
such that for every $x_k\in \mathcal{C}$ and $\Delta
t\in[0,\hat\tau]$ we have $x_{k+1}\in \mathcal{C}$, i.e.,
$\mathcal{C}$ is an invariant set for the discrete system.
\end{theorem}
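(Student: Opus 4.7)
The plan is to follow the two-step structure of the proof of Theorem \ref{themacor311}, with two substitutions dictated by the nonlinear setting: in the first step the operator-norm bound is replaced by the Lipschitz inequality (\ref{dlipcond}), and in the second step the linear scaling identity is replaced by the homogeneity identity (\ref{homcond}). The case $x_k = 0$ is handled separately and is immediate: setting $\alpha = 0$ in (\ref{homcond}) and using $p \geq 1$ gives $D(\Delta t, 0) = 0 \in \mathcal{C}$.

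For the first step, I would intersect $\mathcal{C}$ with a hyperplane $\mathcal{H}$ transversal to $\mathcal{C}$ and not passing through the origin, producing a base $\mathcal{C^+} = \mathcal{H} \cap \mathcal{C}$. Closedness of $\mathcal{H}$ and $\mathcal{C}$ together with pointedness of $\mathcal{C}$ make $\mathcal{C^+}$ compact, exactly as argued in the proof of Theorem \ref{themacor311}. Since $0 \notin \mathcal{C^+}$, Condition 2 applies at every point of $\mathcal{C^+}$, so I can run the localization-plus-finite-subcover argument from the proof of Theorem \ref{themacor313}: for each $x_k \in \mathcal{C^+}$ choose $\hat\tau(x_k) \leq \tau(x_k)$ so that $x_{k+1} \in \text{int}(\mathcal{C})$, select an open ball $\delta(x_{k+1}, r(x_{k+1})) \subset \text{int}(\mathcal{C})$, and use the Lipschitz bound to show that $\tilde x_k \in \delta(x_k, \tfrac{1}{L} r(x_{k+1}))$ implies $D(\hat\tau(x_k), \tilde x_k) \in \delta(x_{k+1}, r(x_{k+1})) \subset \mathcal{C}$. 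A finite subcover of $\mathcal{C^+}$ then yields a uniform threshold $\hat\tau(\mathcal{C^+}) := \min_k \hat\tau(x_k) > 0$ with the property that $x_k \in \mathcal{C^+}$ and $\Delta t \in [0, \hat\tau(\mathcal{C^+})]$ give $x_{k+1} \in \mathcal{C}$.

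The second step extends the threshold from the base to the full cone via homogeneity. For $0 \neq x_k \in \mathcal{C}$, since $\mathcal{C^+}$ is a base there exists a unique $\gamma > 0$ with $\tilde x_k := \gamma x_k \in \mathcal{C^+}$. Then
\begin{equation*}
x_{k+1} = D(\Delta t, x_k) = D(\Delta t, \gamma^{-1} \tilde x_k) = \gamma^{-p} D(\Delta t, \tilde x_k) = \gamma^{-p} \tilde x_{k+1},
\end{equation*}
and the first step combined with the conical property ($\gamma^{-p} > 0$ and $\mathcal{C}$ is a cone) yields $x_{k+1} \in \mathcal{C}$ for every $\Delta t \in [0, \hat\tau(\mathcal{C^+})]$. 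Setting $\hat\tau := \hat\tau(\mathcal{C^+})$ then completes the argument.

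The main difficulty is bridging the unbounded cone back to the compact base: in the linear setting this was immediate since $D(\Delta t)$ commutes with positive scalars, whereas here the degree-$p$ homogeneity hypothesis (\ref{homcond}) is precisely what makes the rescaling identity close up with the positive factor $\gamma^{-p}$, preserving cone membership. So while conceptually the machinery from Theorems \ref{themacor311} and \ref{themacor313} is reused almost verbatim, condition 4 is the hinge on which the whole argument turns, and it is the sole place where the nonlinearity interacts non-trivially with the cone structure.
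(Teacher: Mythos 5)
Your proposal is correct and follows essentially the same route as the paper: the paper's proof of this theorem simply reuses the two-step base-plus-extension argument of Theorem \ref{themacor311} (with the Lipschitz-based covering argument of Theorem \ref{themacor313} on the compact base) and replaces the linear scaling identity by $x_{k+1}=D(\Delta t,\gamma^{-1}\tilde x_k)=\gamma^{-p}\tilde x_{k+1}$ exactly as you do. Your additional observation that $D(\Delta t,0)=0$ follows from homogeneity with $\alpha=0$ is a nice touch but does not change the argument.
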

\begin{proof}
The proof is almost the same as the one presented in Theorem \ref{themacor311}. The only difference is that equation (\ref{condcone}) will be replaced by
\begin{equation}\label{condcone1}
x_{k+1}=D(\Delta t,x_k)=D(\Delta t,{\gamma^{-1}}
\tilde{x}_k)={\gamma^{-p}} \tilde{x}_{k+1},
\end{equation} 
which is due to (\ref{homcond}).
\end{proof}

\section{Conclusions}\label{sec:con}
Invariant sets plays an important role both in the theory and practical applications of dynamical systems and control theory. A key topic in the study of this field is to investigate and derive conditions for discretization methods, and discretization steplength, so that an invariant set of a continuous system is also an invariant set for the corresponding discrete system obtained by using the discretization method. This problem can be referred to as to finding local or uniform invariance preserving steplength thresholds for the discretization methods. 

Existing results usually rely on the assumption that the explicit Euler method has an invariance preserving steplength threshold. In this paper, first we study the existence and the quantification of local and uniform invariance preserving steplength threshold for Euler methods on special sets, namely, polyhedra, ellipsoids, or Lorenz cones.  Our  novel proofs are using only elementary concepts.
We also extend our results and proofs to general convex sets, compact sets, and proper cones when a general discretization method is applied to linear or nonlinear dynamical systems. Conditions for the existence of a uniform invariance preserving steplegnth threshold for discretization methods on these sets are presented. This paper contributes to the study of invariant sets both in theory and in practice. 
One can use our results as  criteria  to check if a discretization method is invariance preserving with a uniform steplength threshold.
Once the existence of a uniform invariance preserving steplength threshold is ensured by the results of this paper, there is a need to find the optimal  steplength threshold for a given discretization method.  This will remain the subject of  future research.

\section*{Acknowledgements}

This research is supported by a Start-up grant of Lehigh University, and by
TAMOP-4.2.2.A-11/1KONV-2012-0012: Basic research for the development of
hybrid and electric vehicles. The TAMOP Project is supported by the European Union
and co-financed by the European Regional Development Fund.

\bibliographystyle{IMANUM-BIB}
\bibliography{myref}




\end{document}